\newtheorem{theorem}{Theorem}[section]
\newtheorem{lemma}[theorem]{Lemma}
\theoremstyle{definition}
\newtheorem{remark}[theorem]{Remark}
\newtheorem{conjecture}[theorem]{Conjecture}
\numberwithin{equation}{section}
\begin{document}

\title[On equations $(-1)^{\alpha}p^x+(-1)^{\beta}(2^k(2p+1))^y=z^2$ with Sophie Germain prime $p$]{On equations $(-1)^{\alpha}p^x+(-1)^{\beta}(2^k(2p+1))^y=z^2$ with Sophie Germain prime $p$}
\author[ Yuan Li $^{1*}$, Jing Zhang $^{2}$,  Baoxing Liu $^{3}$]{Yuan Li $^{1*}$ , Jing Zhang $^{2}$, Baoxing Liu $^{3}$}

\address{{\small $^{1}$ Mathematics Department, Winston-Salem State University, NC
27110,USA}\\ \newline
{\small email: liyu@wssu.edu}\\ \newline
{\small $^{2}$ Mathematics department, Governors State University, IL
60484,USA}\\ \newline
{\small email: jzhang@govst.edu}\\  \newline
{\small $^{3}$ Mathematics Department, Winston-Salem State University, NC
27110,USA}\\ \newline
{\small email: bliu121@rams.wssu.edu}\\}

\thanks{ $^{*}$ Corresponding author
\\
Mathematics Subject Classification: 11A15, 11D61, 11D72, 14H52}

\date{}

\begin{abstract}
In this paper, we consider the Diophantine equation  $(-1)^{\alpha}p^x+(-1)^{\beta}(2^k(2p+1))^y=z^2$ for Sophie Germain prime $p$ with $\alpha, \beta \in\{0,1\}$,  $\alpha\beta=0$ and $k\geq 0$. First, for $p=2$, we solve three  Diophantine equations $(-1)^{\alpha}2^x+(-1)^{\beta}(2^{k} 5)^y=z^2$  by using Nagell-Lijunggren Equation and the database  LMFDB of elliptic curve $y^2=x^3+ax+b$ over $\mathbb{Q}$. Then we obtain all  non-negative integer solutions for the following four types of equations for odd Sophie Germain prime $p$:

i) $p^x+(2^{2k+1}(2p+1))^y=z^2$ with $p\equiv 3, 5 \pmod 8$ and $k\geq 0$;

ii) $p^x+(2^{2k}(2p+1))^y=z^2$ with $p\equiv 3 \pmod 8$ and $k\geq 1$;

iii) $p^x-(2^{k}(2p+1))^y=z^2$ with $p\equiv 3 \pmod 4$ and $k\geq 0$;

iv) $-p^x+(2^{k}(2p+1))^y=z^2$ with $p\equiv 1, 3, 5 \pmod 8$ and $k\geq 1$;

 For each type of the equations, we show the existences of such prime $p$. Since it was conjectured that there exist infinitely many  Sophie Germain primes  in literature,  it is reasonable to conjecture  that  there exist  infinite Sophie Germain primes $p$ such that $p\equiv k \pmod 8$ for any $k\in\{1,3,5,7\}$.  
\end{abstract}
\maketitle

 {\bf Keywords}: Catalan Equation, Elliptic Curve, Exponential Diophantine Equation,  Legendre
Symbol, Nagell-Lijunggren Equation, Quadratic Reciprocity Law, Sophie Germain Prime. 

\section{Introduction}
In recent decades, a lot of research has been done on the Diophantine equation $a^x+b^y=z^2$, in particular, for various fixed pairs of $(a, b)$.  P. B. Borah and M. Dutta showed 
that there is only one solution for the equation $7^x+32^y=z^2$ 
\cite{Bor}
and only two solutions for the equation $2^x+7^y=z^2$ for $x\neq 1$. In \cite {Zhang}, the first two authors answered  a question proposed by P. B. Borah and M. Dutta in \cite{Bor} by obtaining all  non-negative solutions of the equation $2+7^y=z^2$.  W. S. Gayo and J. B. Bacani \cite{Gay} solved the equation $M_p^x+(M_q+1)^y=z^2$ for Mersenne primes $M_p=2^p-1$ and $M_q=2^q-1$.  R. J. S. Mina and J. B. Bacani studied the equation $p^x+(p+4k)^y=z^2$ for prime pairs $p$ and $p+4k$ and received many results in \cite{Min2}. A summary of  most recent works on the equation $a^x+b^y=z^2$ can be found in \cite{Gay, Min2}. On the other hand, the equation $a^x-b^y=z^2$ also received a lot of attention \cite{Le, Ter, Yua2}. 

A prime $p$ is a Sophie Germain prime if $2p+1$ is also a prime. Sophie Germain primes  are named after French mathematician Sophie Germain, who used them in her investigation of Fermat's Last Theorem \cite{Mus}. A prime $2p+1$ is called a safe prime if $p$ is a prime. Sophie Germain primes and safe primes have applications in public key cryptography and primality testing \cite{Gat}. 
 
The following theorem was  stated by Euler in 1750 and proved by Lagrange in 1775: If $p\equiv 3$ (mod 4) is a prime, then $2p+1$ is also prime if and only if $2p+1$ divides $2^p-1$. This theorem shows the relation between Sophie Germain primes and Mersenne primes. It has been conjectured that there are infinite many Sophie Germain (Safe) primes \cite{Sh}. 
In fact,  on page 123 in  Shoup's book,  it is  commented:   ``numerical evidence, and heuristic arguments, strongly
suggest not only that there are infinitely many such primes, but also a fairly precise
estimate on the density of such primes.''  

In this paper,  we consider Diophantine equations $(-1)^{\alpha}p^x+(-1)^{\beta}(2^k(2p+1))^y=z^2$ for the prime pairs $(p, 2p+1)$, where $\alpha, \beta =0,1$ and $k\geq 0$.   We will list some results which are needed in the proof of our main theorems in Section 2.
We will first solve three equations $(-1)^{\alpha}2^x+(-1)^{\beta}(2^k 5)^y=z^2$ by using Nagell-Lijunggren Equation and the  the database of LMFDB on elliptic curves over $\mathbb{Q}$ in Section 3. Then we solve  four types of  equations $(-1)^{\alpha}p^x+(-1)^{\beta}(2^k(2p+1))^y=z^2$ for odd Sophie Germain prime $p$ in Section 4.

\section{Preliminaries}

Conjectured by  Eug$\grave{\mbox{e}}$ne Charles Catalan in 1844,
   Lemma \ref{ThCM} was  proved by  by Preda Mih$\breve{\mbox{a}}$ilescu \cite{Mih}
in 2002.

\begin{lemma} [\bf{Catalan-Mih$\breve{\mbox{a}}$ilescu Theorem}]\label{ThCM}
 The unique solution for the Diophantine equation $a^x-b^y=1$ where $a$, $b$, $x$, $y$ $\in\mathbb{Z}$ with $\min\{a,b,x,y\}>1$ is $(a,b,x,y)=(3,2,2,3)$.
\end{lemma}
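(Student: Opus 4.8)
The plan is to follow the standard reduction-and-cyclotomic-fields route and, for the deep final step, to quote Mih\u{a}ilescu's work. First I would reduce to \emph{prime} exponents. If $a^x-b^y=1$ with $\min\{a,b,x,y\}>1$, choose a prime $p\mid x$ and a prime $q\mid y$ and set $A=a^{x/p}\ge 2$, $B=b^{y/q}\ge 2$, so that $A^p-B^q=1$; it then suffices to prove that $A^p-B^q=1$ with $A,B\ge 2$ and $p,q$ prime forces $(A,B,p,q)=(3,2,2,3)$, since unwinding the substitution gives $a=3$, $x=p=2$, $b=2$, $y=q=3$.

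Next I would dispose of the cases in which one exponent is $2$. If $q=2$, the equation is $A^p=B^2+1$ with $p$ an odd prime (the case $p=2$ being excluded here since $A^2-B^2=1$ has no solution with $A,B\ge 2$). A parity check shows $A$ is odd and $B$ even, so the factors of $(B+i)(B-i)=A^p$ are coprime in $\mathbb{Z}[i]$; since $\gcd(p,4)=1$ every unit of $\mathbb{Z}[i]$ is a $p$-th power, whence $B+i=(u+vi)^p$, and comparing imaginary parts forces $B=0$, a contradiction. If $p=2$, the equation $A^2-1=B^q=(A-1)(A+1)$ with $q$ odd is Ko Chao's theorem; its proof is elementary, the two factors sharing at most a factor of $2$ and a short descent ruling everything out. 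This leaves $p,q$ both odd primes.

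For $p,q$ odd primes I would invoke Cassels' relations: $q\mid A$, $p\mid B$, and (for suitable integers $a_1,b_1$) $A-1=p^{q-1}a_1^q$ and $\tfrac{A^p-1}{A-1}=p\,b_1^q$, together with the symmetric statement in $B$ and $q$. One then passes to the cyclotomic field $\mathbb{Q}(\zeta_p)$: analysing the ideal generated by $\tfrac{A-\zeta_p}{1-\zeta_p}$ shows it is a $q$-th power of an ideal, and applying Stickelberger's theorem to an annihilator of the relevant ideal class yields strong divisibility constraints on $A,B,p,q$. Mih\u{a}ilescu's contribution is a purely algebraic argument (which, in its streamlined form, avoids both Baker's method and the double-Wieferich-pair criterion) showing that these constraints cannot all hold once $p,q\ge 3$. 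Combining the three cases, the only surviving possibility is $3^2-2^3=1$, which is the claim.

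The main obstacle is precisely this last step: controlling the class group of $\mathbb{Q}(\zeta_p)$ through Stickelberger's theorem and carrying out Mih\u{a}ilescu's delicate elimination of the remaining configurations is genuinely deep and constitutes the whole of \cite{Mih}. I would not expect to reconstruct it here, and in the present paper the theorem is legitimately used as a black box.
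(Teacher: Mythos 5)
The paper gives no proof of this lemma at all --- it is quoted as a known result from Mih\u{a}ilescu's paper \cite{Mih} --- and your sketch (reduction to prime exponents, Lebesgue's Gaussian-integer argument for $q=2$, Ko Chao's theorem for $p=2$, and the Cassels/Stickelberger machinery for two odd primes) is an accurate description of the standard proof, with the deep final step correctly deferred to the cited reference. Since both you and the paper ultimately treat the theorem as a black box resting on \cite{Mih}, your treatment is consistent with the paper's.
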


Lemma \ref{lm1}  is a direct consequence of \cite{Ko} and a special case of  Catalan-Mih$\breve{\mbox{a}}$ilescu Theorem. One can also prove it directly.
\begin{lemma}\label{lm1}
If $p$ is a prime, then $p^x+1=y^2$ has only non-negative solutions $(p,x,y)=(2,3,3)$ and $(p,x,y)=(3,1,2)$.
\end{lemma}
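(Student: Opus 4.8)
The plan is to reduce the problem to an elementary factorization argument, after clearing away the degenerate cases. If $x=0$ then the equation reads $y^2=2$, which is impossible; and if $x\ge 1$ then $p^x\ge 2$, so $y^2=p^x+1\ge 3$ forces $y\ge 2$. Next, for $x=1$ we get $p=y^2-1=(y-1)(y+1)$ with $y\ge 2$; since $p$ is prime and $y+1>y-1\ge 1$, the only way this product can be prime is $y-1=1$, i.e. $y=2$ and $p=3$, which is the solution $(p,x,y)=(3,1,2)$. It therefore remains to handle $x\ge 2$ (with $y\ge 2$).

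For $x\ge 2$ I would rewrite the equation as $(y-1)(y+1)=p^x$. The positive divisors of $p^x$ are exactly the powers $1,p,p^2,\dots,p^x$, so both $y-1$ and $y+1$ are powers of $p$: say $y-1=p^a$ and $y+1=p^b$ with $0\le a\le b$ and $a+b=x$. Subtracting, $p^b-p^a=2$. If $a\ge 1$, then $p\mid 2$, so $p=2$, and $2^a(2^{\,b-a}-1)=2$ forces $a=1$ and $2^{\,b-a}-1=1$, hence $b=2$ and $x=a+b=3$; this is the solution $(p,x,y)=(2,3,3)$. If instead $a=0$, then $y-1=1$, so $y=2$ and $p^b=3$, which gives $p=3$, $b=1$, and $x=1$, contradicting $x\ge 2$. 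This exhausts all cases and yields exactly the two stated solutions.

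A shorter route for the range $x\ge 2$ is to quote the Catalan--Mih\u{a}ilescu Theorem (Lemma~\ref{ThCM}): for $x\ge 2$, $y\ge 2$, $p\ge 2$ one has $y^2-p^x=1$ with $\min\{y,p,2,x\}\ge 2$, whence $(y,p,2,x)=(3,2,2,3)$. I prefer the elementary argument above because it is self-contained and does not invoke that deep result. The proof has no serious obstacle; the only point requiring a little care is $p=2$, where $y-1$ and $y+1$ share a common factor and one cannot immediately conclude that one of them equals $1$ --- this is precisely what the equation $p^b-p^a=2$ captures, and a $2$-adic valuation count then pins the solution down.
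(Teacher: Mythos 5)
Your proof is correct. Note that the paper does not actually prove this lemma: it simply remarks that the statement ``is a direct consequence of \cite{Ko} and a special case of Catalan--Mih\u{a}ilescu Theorem. One can also prove it directly.'' Your elementary factorization argument is precisely the direct proof the authors allude to but omit, and it is sound at every step: the reduction $(y-1)(y+1)=p^x$, the parametrization $y-1=p^a$, $y+1=p^b$ with $p^b-p^a=2$, and the valuation argument isolating $p=2$, $a=1$, $b=2$ all check out, as does the separate treatment of $x=0$ and $x=1$. Your alternative route via Lemma~\ref{ThCM} is exactly one of the two justifications the paper cites, so both of your suggested arguments align with what the authors had in mind; the elementary one has the advantage of being self-contained and not resting on Mih\u{a}ilescu's theorem.
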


Ljunggren and  Nagell  proved the following results \cite{Bug, Lju}.

\begin{lemma} [\bf{Nagell-Ljunggren Theorem}]\label{ThNL}
If $x,y>1$, $n>2$, $q\geq 2$, apart from the solutions $\frac{3^5-1}{3-1}=11^2$, $\frac{7^4-1}{7-1}=20^2$ and $\frac{18^3-1}{18-1}=7^3$ the following equation
\begin{equation}\label{eq1}
\frac{x^n-1}{x-1}=y^q
\end{equation}
has no other solution $(x,y,n,q)$ if either one of the following conditions is satisfied:

(i) $q=2$,

(ii) $3$ divides $n$,

(iii) $4$ divides $n$,

(iv) $q=3$ and $n\not\equiv 5$ $\pmod 6$.
\end{lemma}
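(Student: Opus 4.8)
The plan is to exploit the multiplicative structure of the repunit $R_n(x) := \frac{x^n-1}{x-1}$ through the cyclotomic factorization $R_n(x) = \prod_{d \mid n,\, d > 1} \Phi_d(x)$, which lets me peel off divisors of $n$ and reduce the four cases to a handful of essential configurations. Throughout I assume $x > 1$, $y > 1$, $n > 2$, $q \geq 2$ and set $R_n(x) = y^q$. Since $R_m(x) \mid R_n(x)$ whenever $m \mid n$, with quotient $\frac{x^n-1}{x^m-1}$, the overall strategy is to reduce each case to $n$ prime or to one of the small exponents $n \in \{3,4\}$.

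First I would dispose of hypotheses (ii) and (iii) by elementary factoring. When $n = 2s$ is even one has the clean split
\[
R_{2s}(x) = \frac{x^s-1}{x-1}\,(x^s+1) = R_s(x)\,(x^s+1),
\]
and since $(x^s+1)-(x^s-1) = 2$ the two factors have gcd dividing $2$; when $n = 3m$ one instead writes $R_{3m}(x) = R_m(x)\,(x^{2m}+x^m+1)$, and a short congruence shows the two factors share only a common divisor dividing $3$. In both situations, after stripping the bounded common factor, the requirement that a product of nearly coprime integers be a perfect $q$-th power forces each factor to be essentially a $q$-th power. This pushes the problem onto equations of the shape ``$x^s+1$ is a perfect square'' (or the analogous cubic relation), where I would invoke Lemma \ref{lm1} together with the Catalan--Mih\u{a}ilescu Theorem (Lemma \ref{ThCM}) to pin down the finitely many possibilities, recovering exactly $R_4(7) = 20^2$ and $R_3(18) = 7^3$.

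The genuinely hard residual configurations are (a) case (i) with $n$ an odd prime, and (b) case (iv) with $q = 3$. For (a), after the reductions above one is left with $R_p(x) = y^2$ for an odd prime $p$; here the elementary sandwiching that settles $R_3(x)$ (which lies strictly between $x^2$ and $(x+1)^2$, hence is never a square) no longer applies, and one must pass to the arithmetic of the real quadratic field $\mathbb{Q}(\sqrt{x})$, studying the associated Lehmer/Pell sequence and its square terms to isolate the single surviving solution $R_5(3) = 11^2$. Case (b) is handled by the parallel but more delicate cubic analysis of Ljunggren, in which one controls when $R_n(x)$ can be a cube via the factorization of $x^n - 1$ over $\mathbb{Q}(\sqrt{-3})$ and the restriction $n \not\equiv 5 \pmod 6$.

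The main obstacle is exactly these two residual cases. Once $n$ is prime and $q \in \{2,3\}$, the factorization collapses to a single irreducible cyclotomic factor and no longer produces coprime pieces, so the reduction to Catalan-type equations is unavailable. Overcoming this requires the deep quadratic and cubic Diophantine arguments of Nagell and Ljunggren, which constitute the content of the cited references \cite{Bug, Lju}; in a modern treatment the underlying finiteness can instead be made effective through bounds on linear forms in logarithms. I would therefore quote these results for the residual primes and combine them with the elementary reductions above to assemble the stated complete list of solutions.
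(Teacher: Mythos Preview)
The paper does not give a proof of this lemma at all: it is stated as a quoted result, attributed to Nagell and Ljunggren with the citations \cite{Bug, Lju}, and then used as a black box later in the paper. There is therefore no ``paper's own proof'' for you to be compared against; your proposal is not competing with anything the authors wrote.

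As a standalone sketch, your outline is a reasonable high-level roadmap, but a couple of the advertised reductions do not close as cleanly as you suggest. In cases (ii) and (iii) you split $R_n(x)$ into two nearly coprime factors and then say the problem ``pushes onto equations of the shape $x^s+1$ is a perfect square,'' to be finished by Lemmas~\ref{lm1} and~\ref{ThCM}. But the split also leaves you with the companion factor $R_s(x)$ (respectively $x^{2m}+x^m+1$) being essentially a $q$-th power, and neither of those is of Catalan type; you would need an inductive descent on $n$ together with a separate treatment of $x^{2m}+x^m+1 = $ (near) $q$-th power, which is itself nontrivial. You acknowledge that the genuinely hard residual cases ($n$ an odd prime with $q=2$, and $q=3$) must be quoted from \cite{Bug, Lju} anyway, so in the end your proposal, like the paper, ultimately rests on citing the classical Nagell--Ljunggren arguments rather than reproving them.
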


More works about Nagell-Ljunggren equation can be found in \cite{Bug0, Bug, Li, Yua}.

The discriminant of elliptic curve $y^2=x^3+ax+b$ over $\mathbb{Q}$ is $\Delta=-16(4a^3+27b^2)$. 
Using discriminant to search in 
``The L-functions and modular forms database'' (LMFDB) \cite{LM},
 we  have 

\begin{lemma}\label{lmCurve}
The three elliptic curves over $\mathbb{Q}$ and their integral points are given below.

(1)  $y^2=x^3-4$, $(x,y)=(2,\pm 2)$, $(5,\pm 11)$. 

(2)  $y^2=x^3-100$, $(x,y)=(5,\pm 5)$, $(10,\pm 30)$, $(34,\pm 198)$.

(3)  $y^2=x^3-2500$, $(x,y)=(50,\pm 350)$.

\end{lemma}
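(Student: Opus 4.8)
The plan is to treat each of the three curves as a Mordell curve
$E_m:\ y^2=x^3-m^2$ with $m\in\{2,10,50\}$ — observe that $4=2^2$, $100=10^2$ and $2500=50^2$ — and to find all integral points by the standard two-step procedure for equations of this shape: first determine the Mordell--Weil group $E_m(\mathbb Q)$, and then turn the search for integral points into a finite, effectively bounded enumeration. Finiteness itself is guaranteed abstractly by Baker's theory; the content of Lemma~\ref{lmCurve} is the explicit list.

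For the Mordell--Weil group one carries out a $2$-descent (equivalently, one reads the data off LMFDB or runs \texttt{mwrank}, or the \texttt{EllipticCurve} machinery in Magma, Sage or PARI) to obtain the rank together with an explicit set of generators; for curves of such small conductor this computation is immediate, and the generators may be taken among the points already listed in the lemma. Given generators, one writes the coordinates of a hypothetical integral point as an explicit $\mathbb Z$-linear combination of the elliptic logarithms of the generators and of the real period, and applies the theory of linear forms in elliptic logarithms (David's lower bound, in the form systematized by Stroeker and Tzanakis) to obtain an astronomically large but fully explicit upper bound on the coefficients. An LLL reduction in the style of de Weger then collapses this bound to a few units, after which a direct search completes the proof. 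This is precisely what the \texttt{IntegralPoints} routine of the standard computer algebra systems certifies, so one may simply invoke it; the role of the present remark is to record that the list in Lemma~\ref{lmCurve} is the rigorous output of this procedure, not a mere heuristic table.

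Alternatively, and more in the elementary spirit of the rest of the paper, one can bypass elliptic logarithms by factoring in the Gaussian integers. Writing the equation as $x^3=(y+mi)(y-mi)$ in $\mathbb Z[i]$, which is a principal ideal domain, one separates cases according to the $(1+i)$-adic valuations of the two factors, i.e.\ according to the parity of $x$ and $y$. When $x$ is even the substitution $x=2X$, $y=2Y$, $m=2M$ gives a descent to $Y^2+M^2=2X^3$ with $M\in\{1,5,25\}$, which is treated the same way. In the generic case the two Gaussian factors are coprime, hence each is a unit times a perfect cube; expanding $(a+bi)^3$ and matching the coefficient of $i$ reduces the problem to a short list of binary cubic (Thue) equations whose integer solutions are read off directly. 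This route is entirely explicit but requires careful bookkeeping.

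The main obstacle in the elementary route is exactly this bookkeeping: since $m$ is always even, $1+i$ divides at least one of the two factors, so one must extract the common factor correctly and make sure that no branch, and no associate or unit, is overlooked before unique factorization is invoked. In the algorithmic route the only step that is not purely formal is certifying that the computed generators generate all of $E_m(\mathbb Q)$ and not merely a finite-index subgroup (a saturation check against an explicit height bound); for conductors this small this is completely routine. In either approach the proof thus reduces to a finite, verifiable computation whose outcome is precisely the list stated.
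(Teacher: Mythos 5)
Your primary route---certifying the three integral-point lists as the rigorous output of the standard descent/elliptic-logarithm/LLL computation implemented in the usual \texttt{IntegralPoints} routines and recorded in LMFDB---is exactly what the paper does: its entire proof is a lookup of these curves in LMFDB by discriminant, so your proposal matches the paper while supplying the theoretical justification the paper leaves implicit. Your alternative $\mathbb{Z}[i]$ sketch is extra and not attempted in the paper; if you ever carry it out, note that for $m=10$ and $m=50$ the common factor of $y+mi$ and $y-mi$ can involve the primes above $5$ as well as $1+i$, so the bookkeeping is a bit heavier than your description suggests.
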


The equations in Lemma \ref{lmCurve} are examples of Mordell's equation. A  Mordell's equation   has only
finitely many integral solutions \cite{C}.

The following simple facts will be useful.

If $2^k-1$ is a prime, then $k$ is a prime. If $2^k+1$ is a prime, then $k=0$ or $k=2^r$ with $r\geq 0$.

\begin{lemma}\label{lmk}
If $p$ is a Sophie Germain prime and $k\geq 1$, then Diophantine equation $1+(2^k(2p+1))^y=z^2$ has only non negative solutions $(p,k,y,z)=(2,4,1,9), (3,5,1,15)$.
\end{lemma}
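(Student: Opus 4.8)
The plan is to handle $y=0$ at once ($y=0$ gives $1+1=z^2$, i.e.\ $z^2=2$, impossible) and then assume $y\ge 1$. Since $k\ge 1$, the number $(2^k(2p+1))^y$ is even, so $z$ is odd; in particular $z\ge 3$, because $z=1$ would force the right-hand side to vanish. From $z^2-1=(2^k(2p+1))^y$ I would write
\[
(z-1)(z+1)=2^{ky}(2p+1)^y .
\]
The two factors $z-1$ and $z+1$ are consecutive even integers, so $\gcd(z-1,z+1)=2$ and exactly one of them is divisible by $4$. Crucially, $2p+1$ is an odd prime (this is where the Sophie Germain hypothesis enters), so the entire prime power $(2p+1)^y$ lies in just one of $z-1,z+1$. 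Let $2M$, with $M$ odd, denote the factor that is $\equiv 2\pmod 4$. If $M=1$, that factor equals $2$, forcing $z=3$ and hence $2^{ky}(2p+1)^y=8$, which is impossible since $(2p+1)^y\ge 5$. Otherwise $M=(2p+1)^y$, so the two factors are $2(2p+1)^y$ and $2^{ky-1}$; as they differ by $2$ this gives
\[
\bigl|(2p+1)^y-2^{ky-2}\bigr|=1 ,
\]
and checking the divisibility-by-$4$ condition on the factor $2^{ky-1}$ (the other factor being only $\equiv 2\pmod 4$) rules out $ky\le 2$, so $ky\ge 3$ and $ky-2\ge 1$.

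It then remains to solve $|(2p+1)^y-2^{ky-2}|=1$. If $y\ge 2$, I would first dispose of $ky-2=1$ (which forces $ky=3$, hence $(2p+1)^3\in\{1,3\}$, impossible), so that $ky-2\ge 2$; then $(2p+1)^y$ and $2^{ky-2}$ are perfect powers with exponents $\ge 2$, and by Lemma \ref{ThCM} two such powers differ by $1$ only for $3^2-2^3=1$, which would force $2p+1=3$ and is impossible. Hence $y=1$, and the equation reads $2p+1=2^{k-2}\pm 1$ with $k\ge 3$. For the sign $+1$ we get $p=2^{k-3}$, so the primality of $p$ forces $k=4$, $p=2$; for the sign $-1$ we get $p=2^{k-3}-1$ and $2p+1=2^{k-2}-1$, and since both of these must be prime, the elementary fact that $2^n-1$ prime implies $n$ prime makes $k-3$ and $k-2$ consecutive primes, so $k=5$, $p=3$. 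Substituting back, $(p,k,y)=(2,4,1)$ gives $z^2=1+80=81$, i.e.\ $z=9$, and $(p,k,y)=(3,5,1)$ gives $z^2=1+224=225$, i.e.\ $z=15$.

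I expect the only real obstacle to be the elementary but slightly fiddly bookkeeping in the factorization step — tracking which of $z-1,z+1$ absorbs the power of $2$ and which absorbs $(2p+1)^y$, and excluding the small values of $ky$ — rather than anything substantial: once $|(2p+1)^y-2^{ky-2}|=1$ is in hand, Lemma \ref{ThCM} together with the Mersenne-exponent fact finishes the argument quickly. A point that needs care is that in the case $y=1$ one must invoke the primality of both $p$ and $2p+1$, and it is the latter that pins down $k=5$ when the sign is $-1$.
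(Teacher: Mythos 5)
Your proof is correct and follows essentially the same route as the paper's: both factor $z^2-1=(z-1)(z+1)$, use the primality of $2p+1$ to control how the factors split, invoke the Catalan--Mih\u{a}ilescu theorem to force $y=1$, and finish with the Mersenne/Fermat-type primality facts that pin down $k=4$ and $k=5$. The only difference is one of ordering: the paper applies Catalan--Mih\u{a}ilescu at the outset to $z^2-(2^k(2p+1))^y=1$ and gets $y=1$ before factoring, whereas you carry a general $y$ through the factorization and apply it to $\lvert(2p+1)^y-2^{ky-2}\rvert=1$ at the end --- a harmless reshuffling of the same ingredients.
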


\begin{proof}
In $z^2-(2^k(2p+1))^y=1$, obviously, $z\geq 2$. By Lemma \ref{ThCM}, we get $y=1$. Let $q=2p+1$, we obtain

$$(z-1)(z+1)=2^kq.$$

Let $d=\gcd(z-1,z+1)$, then $d\mid 2$,  we obtain $d=2$ since $k\geq 1$. There are three possibilities. Namely, $z-1=2, 2q, 2^{k-1}$ since $z-1<z+1$.

Case 1: $z-1=2$ and $z+1=2^{k-1}q$.

In this case, we have $2=2^{k-1}q-2$ which is impossible.

Case 2: $z-1=2q$ and $z+1=2^{k-1}$.

In this case, we have $2=2^{k-1}-2q$. We have $q=2^{k-2}-1$ and $p=2^{k-3}-1$. Since both $p$ and $q$ are primes, so are $k-2$ and $k-3$. Therefore, $k=5$ and $(p,k,y,z)=(3,5,1,15)$.

Case 3: $z-1=2^{k-1}$ and $z+1=2q$.

In this case, we have $q=2^{k-2}+1$ and $p=2^{k-3}$. Since  $p$ is prime, so  $p=2$ and $k=4$. Therefore, $(p,k,y,z)=(2,4,1,9)$.
\end{proof}

For $k=1, 3, 5, 7$, let ${\mathbb{S}\mathbb{G}}_k$ be the set of Sophie Germain primes $p$, such that $p\equiv k \pmod 8$. We list all  elements which are less than 1000 in ${\mathbb{S}\mathbb{G}}_k$  below. 

\[
{\mathbb{S}\mathbb{G}}_1=\{41, 89, 113, 233, 281, 593, 641, 761, 809, 953,\dots \dots\},
\]

\[
{\mathbb{S}\mathbb{G}}_3=\{3, 11, 83, 131, 179, 251, 419, 443, 491, 659, 683,\dots \dots\},
\]

\[
{\mathbb{S}\mathbb{G}}_5=\{5, 29, 53, 173, 293, 509, 653,\dots \dots\},
\]

\[
{\mathbb{S}\mathbb{G}}_7=\{23, 191, 239, 359, 431, 719, 743, 911,\dots \dots\}.
\]

It is reasonable to conjecture that each of the above sets is infinite. 
In general, one can have the following.
\begin{conjecture}
For any $m\in \mathbb{N}$, and any $k=1,3,\dots, 2^m-1$, there are infinite many Sophie Germain prime $p$ such that $p\equiv k \pmod {2^m}$. 
\end{conjecture}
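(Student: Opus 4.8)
The plan is to prove the conjecture \emph{conditionally} on Schinzel's Hypothesis H (equivalently the Dickson / prime $k$-tuples conjecture), since an unconditional proof is not within reach of current methods: the case $m=1$ already asserts that there are infinitely many Sophie Germain primes, the open problem recalled in the Introduction. Fix $m\in\mathbb{N}$ and an odd integer $k$ with $1\le k\le 2^m-1$. Requiring a prime $p\equiv k\pmod{2^m}$ for which $2p+1$ is also prime is exactly the requirement that the two affine-linear polynomials
\[
f_1(n)=2^m n+k, \qquad f_2(n)=2\bigl(2^m n+k\bigr)+1=2^{m+1}n+(2k+1)
\]
take prime values simultaneously for infinitely many $n\ge 1$: each such $n$ gives a Sophie Germain prime $p=f_1(n)$ in the prescribed class, and distinct large $n$ give distinct $p$. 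So the whole statement reduces to the infinitude of such $n$.

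The one computational step is to verify that $\{f_1,f_2\}$ is \emph{admissible}, i.e. that $f_1(n)f_2(n)$ has no fixed prime divisor. Both forms are nonconstant with positive leading coefficient and irreducible over $\mathbb{Z}$, so it suffices to show that for every prime $\ell$ there is some $n$ with $\ell\nmid f_1(n)f_2(n)$. For $\ell=2$: since $k$ is odd and $m\ge 1$, both $f_1(n)$ and $f_2(n)$ are odd for all $n$. For an odd prime $\ell$: $\ell$ is coprime to $2^m$ and to $2^{m+1}$, so $f_1(n)\equiv 0\pmod\ell$ holds for a single residue class $n\equiv -k(2^m)^{-1}\pmod\ell$, and likewise $f_2(n)\equiv 0\pmod\ell$ for a single class; together they exclude at most two classes modulo $\ell$, and since $\ell\ge 3$ at least one class survives. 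Hence $\{f_1,f_2\}$ is admissible, Hypothesis H applies, and it produces infinitely many $n$ with $f_1(n)$ and $f_2(n)$ both prime, which is what we wanted. (The stronger Bateman--Horn conjecture would even give that the number of such $n\le x$ is asymptotic to $\mathfrak{S}(m,k)\,x/(\log x)^2$ with an explicit positive singular series $\mathfrak{S}(m,k)$, so that each density of $\mathbb{SG}_k$ inside the primes is positive.)

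The genuine obstacle is that the argument is only conditional and there is no evident way to make it unconditional: specializing to $m=1$ recovers precisely the Sophie Germain conjecture. What \emph{is} provable unconditionally is a weakened form: Chen's sieve method, carried out in arithmetic progressions, yields infinitely many primes $p\equiv k\pmod{2^m}$ for which $2p+1$ is a $P_2$ (a product of at most two primes); upgrading ``$P_2$'' to ``prime'' is exactly the inaccessible step. Finally, the admissibility check above also explains why only odd $k$ appears in the statement — if $m\ge 1$ and $k$ were even then $f_1(n)$ would always be even, so the progression $k\bmod 2^m$ contains no prime beyond possibly $2$ — and it shows that no further congruence condition on $k$ is hidden, consistent with the numerical lists of $\mathbb{SG}_1,\mathbb{SG}_3,\mathbb{SG}_5,\mathbb{SG}_7$ displayed above.
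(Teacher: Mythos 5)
The statement you are addressing is labelled a \emph{conjecture} in the paper, and the paper offers no proof of it whatsoever: the authors support it only with the numerical lists of $\mathbb{S}\mathbb{G}_1,\mathbb{S}\mathbb{G}_3,\mathbb{S}\mathbb{G}_5,\mathbb{S}\mathbb{G}_7$ up to $1000$ and with the heuristic remark quoted from Shoup's book. So there is no argument in the paper to compare yours against. Your proposal does not prove the statement either, and you say so explicitly: everything rests on Schinzel's Hypothesis H (equivalently Dickson's conjecture for two linear forms), and the case $m=1$ of the statement is already the open Sophie Germain prime conjecture, so no unconditional proof can be expected by present methods. That said, the conditional reduction you give is correct and is the standard way to justify such a conjecture. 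The two forms $f_1(n)=2^mn+k$ and $f_2(n)=2^{m+1}n+2k+1$ are the right ones; the admissibility check is sound ($\ell=2$ is handled because $k$ is odd and $m\ge 1$, and an odd prime $\ell$ excludes at most two of its $\ell\ge 3$ residue classes since $2^m$ and $2^{m+1}$ are units modulo $\ell$); and your observation that admissibility fails precisely when $k$ is even explains why the conjecture is stated only for odd $k$. The Bateman--Horn refinement and the Chen-type $P_2$ remark are reasonable added value. In short: your write-up is a correct \emph{conditional} proof and a useful heuristic strengthening of what the paper asserts, but the conjecture itself remains open, exactly as the paper leaves it.
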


\section{Solutions of the equations $(-1)^{\alpha}2^x+(-1)^{\beta}(2^{k}5)^y=z^2$}

In this section, we will solve three Diophantine equations 
$(-1)^{\alpha}2^x+(-1)^{\beta}(2^{k}5)^y=z^2$. We need two lemmas.

The first lemma was proved by Tomita \cite{To}. 
 For the completeness, we will provide the detailed proof.
\begin{lemma}\label{lm5.1}
The Diophantine equation $5^x=4+y^2$ has only non-negative solutions $(x,y)=(1,1), (3,11)$.
\end{lemma}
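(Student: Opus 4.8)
The plan is to factor the equation $5^x = 4 + y^2$ over the Gaussian integers $\mathbb{Z}[i]$, which is a principal ideal domain, writing $5^x = (y+2i)(y-2i)$. First I would dispose of small cases and parity: since $5^x - y^2 = 4$, reducing mod small numbers shows $y$ is odd (so $\gcd(y+2i, y-2i)$ divides $4i$ but is coprime to $5$), and one checks $x=0$ gives no solution while $x=1$ gives $(1,1)$. For $x \geq 1$ we use the unique factorization $5 = (2+i)(2-i)$ in $\mathbb{Z}[i]$, with $2 \pm i$ prime and non-associate. Since $\gcd(y+2i,y-2i)$ is a unit (the common divisor must divide $4i$ and also $5^x$, forcing it to be a unit as $2\nmid 5$), each Gaussian prime power $(2+i)^x$ or $(2-i)^x$ goes entirely into one factor, so up to units $y+2i = \pm(2+i)^x$ or $y + 2i = \pm i (2+i)^x$, and similarly with $2-i$; by conjugation symmetry it suffices to treat $y + 2i = \varepsilon (2+i)^x$ for a unit $\varepsilon \in \{\pm 1, \pm i\}$.

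Next I would extract the condition that the imaginary part of $\varepsilon(2+i)^x$ equals $\pm 2$. Writing $(2+i)^x = a_x + b_x i$ with integer sequences $a_x, b_x$ satisfying the recurrence coming from $(a_{x+1}+b_{x+1}i) = (2+i)(a_x+b_x i)$, namely $a_{x+1} = 2a_x - b_x$, $b_{x+1} = a_x + 2b_x$, the four unit choices force $b_x = \pm 2$ or $a_x = \pm 2$. I expect the main work to be showing the sequences $|a_x|, |b_x|$ are eventually strictly increasing and exceed $2$ for $x$ beyond a small bound: indeed $|2+i|^x = 5^{x/2}$ grows, and a short induction on the recurrence (tracking that once both $|a_x|, |b_x| \ge 2$ they stay there and grow) reduces the problem to checking $x \in \{1,2,3,4,5\}$ or so by hand. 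Computing $(2+i)^1 = 2+i$, $(2+i)^2 = 3+4i$, $(2+i)^3 = 2+11i$, $(2+i)^4 = -7+24i$, $(2+i)^5 = -38+41i$, one finds the imaginary-part-$\pm2$ condition holds exactly at $x=1$ (giving $y=\pm1$, i.e. $(x,y)=(1,1)$ for non-negative $y$) and the real-part-$\pm2$ condition (coming from the units $\pm i$, which swap roles of $a_x,b_x$ up to sign) holds exactly at $x=1$ and $x=3$ (giving $y = \pm 11$, i.e. $(x,y)=(3,11)$); for $x \geq 6$ both $|a_x|$ and $|b_x|$ exceed $2$, ruling everything out.

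The main obstacle is organizing the unit bookkeeping cleanly — there are four units times two conjugate factorizations, and one must be careful that conjugation and multiplication by $-1$ genuinely collapse these to the two essentially-distinct constraints ``$\operatorname{Im} = \pm 2$'' and ``$\operatorname{Re} = \pm 2$'' for $(2+i)^x$ — and then making the growth estimate on $(a_x, b_x)$ airtight so that only finitely many $x$ need explicit verification. An alternative, if one prefers to avoid $\mathbb{Z}[i]$, is to argue directly: from $5^x = y^2 + 4$ with $x \geq 2$, reduce modulo $25$ to see $x$ is forced into a residue class, or observe $5 \mid y^2 + 4$ is automatic and pass to $5^{x-1} = \frac{y^2+4}{5}$-type descent; but the Gaussian integer route is cleanest and I would follow it.
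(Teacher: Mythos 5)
Your Gaussian-integer strategy is a legitimate classical route to $5^x=y^2+4$, and it is genuinely different from the paper's proof, which splits on $x\bmod 3$ and reduces the equation to the three Mordell curves $Y^2=X^3-4$, $Y^2=X^3-100$, $Y^2=X^3-2500$, quoting their integral points from LMFDB (Lemma~\ref{lmCurve}). The factorization setup is fine: $y$ is odd, $y+2i$ and $y-2i$ are coprime in $\mathbb{Z}[i]$, and one is reduced to deciding when $a_x=\operatorname{Re}\bigl((2+i)^x\bigr)=\pm 2$ or $b_x=\operatorname{Im}\bigl((2+i)^x\bigr)=\pm 2$.

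The gap is in how you propose to finish. The claim that ``once both $|a_x|,|b_x|\ge 2$ they stay there and grow'' is false: $(a_2,b_2)=(3,4)$ but $(a_3,b_3)=(2,11)$, so $|a_x|$ drops back to $2$ immediately, and later $|b_6|=44$ while $|b_7|=29$. There is no monotonicity to induct on, because $a_x=5^{x/2}\cos(x\theta)$ and $b_x=5^{x/2}\sin(x\theta)$ with $\theta=\arctan(1/2)$, and either component can be small when $x\theta$ drifts near a multiple of $\pi/2$. Worse, the statement you need is not a soft growth estimate but is essentially the original problem in disguise: $a_x=\pm 2$ forces $b_x^2=5^x-4$, i.e.\ exactly a solution of the equation. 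So ``check $x\le 5$ and invoke growth'' cannot work. The case $b_x=\pm 2$ can be rescued cheaply: from $b_{x+2}=4b_{x+1}-5b_x$ with $b_1=1$, $b_2=4$ one gets $b_x\equiv \pm 1\pmod 5$ for all $x\ge 1$, so $b_x=\pm2$ never occurs. But $a_x\equiv\pm2\pmod 5$ for \emph{every} $x$, so no single congruence kills that branch; showing $a_x=\pm 2$ only for $x=1,3$ requires the genuinely substantive step (Skolem's method with congruences modulo increasing powers of $5$, or Strassmann's theorem in $\mathbb{Z}_5$), which is precisely the work your proposal omits. As written, the argument does not close; either supply that $5$-adic step or fall back on the Mordell-equation reduction the paper uses.
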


\begin{proof}
We consider three cases:

Case 1. $x=3k$.

Let $X=5^k$, the original equation can be written as

$$y^2=X^3-4.$$

By Lemma \ref{lmCurve}, we obtain $(X,y)=(2,\pm 2), (5,\pm 11)$. Hence, $(x,y)=(3,11)$.
\\

Case 2. $x=3k+1$.

Let $X=5^{k+1}$, $Y=5y$, the original equation can be written as

$$Y^2=X^3-100.$$

By Lemma \ref{lmCurve}, we obtain $(X,Y)=(5,\pm 5), (10,\pm 30), (34,\pm 198)$. 

Hence,  $(x,y)=(1,1)$.
\\

Case 3. $x=3k+2$.

Let $X=5^{k+2}$, $Y=25y$, the original equation can be written as

$$Y^2=X^3-2500.$$

By Lemma \ref{lmCurve}, we obtain $(X,Y)=(50,\pm 350)$. 

Hence,  no solution for $x,y$.

\end{proof}

Similarly, we have
\begin{lemma}\label{lm5.2}
The Diophantine equation $2( 5^x)=1+y^2$ has only non-negative solutions $(x,y)=(0,1), (1,3), (2,7)$.
\end{lemma}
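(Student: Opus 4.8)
The plan is to follow the proof of Lemma~\ref{lm5.1} almost verbatim: I would rewrite $2(5^x)=1+y^2$ as $y^2=2\cdot 5^x-1$ and split into the three cases $x=3k$, $x=3k+1$, $x=3k+2$, reducing each to one of the three Mordell equations already tabulated in Lemma~\ref{lmCurve}. In each case the trick is to multiply through by the right constant so that the power $5^x$ combines into a perfect cube while the right-hand side stays a perfect square plus a constant.

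Concretely, for $x=3k$ I would multiply by $4$ to get $(2y)^2=(2\cdot 5^k)^3-4$; setting $X=2\cdot 5^k$, $Y=2y$ this is $Y^2=X^3-4$, so by Lemma~\ref{lmCurve}(1) the integral points have $X\in\{2,5\}$, and only $X=2$ has the form $2\cdot 5^k$ (forcing $k=0$), yielding $(x,y)=(0,1)$. For $x=3k+1$ I would multiply by $100$ to get $(10y)^2=(2\cdot 5^{k+1})^3-100$; with $X=2\cdot 5^{k+1}$, $Y=10y$ this is $Y^2=X^3-100$, and by Lemma~\ref{lmCurve}(2) the integral points have $X\in\{5,10,34\}$, of which only $X=10$ has the form $2\cdot 5^{k+1}$ (so $k=0$), giving $(x,y)=(1,3)$. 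For $x=3k+2$ I would multiply by $2500$ to get $(50y)^2=(2\cdot 5^{k+2})^3-2500$; with $X=2\cdot 5^{k+2}$, $Y=50y$ this is $Y^2=X^3-2500$, and by Lemma~\ref{lmCurve}(3) the only integral points have $X=50=2\cdot 5^{k+2}$ with $k=0$, giving $(x,y)=(2,7)$. Collecting the cases and keeping $y\ge 0$ leaves exactly $(x,y)=(0,1),(1,3),(2,7)$.

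The only point requiring a little care is the bookkeeping of the multipliers: unlike Lemma~\ref{lm5.1}, here there is a factor of $2$ on the left, so I must check that clearing it (via the factors $4$, $100$, $2500$ respectively) still produces precisely the curves $y^2=x^3-4$, $y^2=x^3-100$, $y^2=x^3-2500$ that appear in Lemma~\ref{lmCurve}, and then verify which of the finitely many tabulated integral points have $x$-coordinate of the admissible shape $2\cdot 5^m$. Both are short finite checks, so once Lemma~\ref{lmCurve} is granted there is no real obstacle; the substantive input is the database computation already recorded there.
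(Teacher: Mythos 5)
Your proposal is correct and matches the paper's own proof essentially verbatim: the same three cases $x=3k,3k+1,3k+2$, the same substitutions $X=2\cdot 5^{k}$, $2\cdot 5^{k+1}$, $2\cdot 5^{k+2}$ with $Y=2y$, $10y$, $50y$, and the same appeal to Lemma~\ref{lmCurve}. The only difference is that you spell out the multipliers $4$, $100$, $2500$ and the admissibility check on the $X$-coordinates more explicitly than the paper does, which is harmless.
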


\begin{proof}
We consider three cases:

Case 1. $x=3k$.

Let $X=2\times 5^k$, $Y=2y$, the original equation can be written as

$$Y^2=X^3-4.$$

By Lemma \ref{lmCurve}, we obtain $(X,Y)=(2,\pm 2), (5,\pm 11)$. Hence, $(x,y)=(0,1)$.
\\

Case 2. $x=3k+1$.

Let $X=2\times 5^{k+1}$, $Y=10y$, the original equation can be written as

$$Y^2=X^3-100.$$

By Lemma \ref{lmCurve}, we obtain $(X,Y)=(5,\pm 5), (10,\pm 30), (34,\pm 198)$. 

Hence,  $(x,y)=(1,3)$.
\\

Case 3, $x=3k+2$.

Let $X=2\times 5^{k+2}$, $Y=50y$, the original equation can be written as

$$Y^2=X^3-2500.$$

By Lemma \ref{lmCurve}, we obtain $(X,Y)=(50,\pm 350)$. 

Hence,  $(x,y)=(2,7)$.
\end{proof}

In 2007, Dumitru Acu \cite{Acu} solved the equation $2^x+5^y=z^2$. It has only solutions $(x,y,z)=(2,1,3),(3,0,3)$. We have

\begin{theorem}\label{4.1}
For $k\geq 1$, the Diophantine equation 
\begin{equation}\label{eq4.1}
2^x+(2^{k}5)^y=z^2
\end{equation}
 has only non-negative solutions $(x,y,z)=(3,0,3)$ and 

$(k,x,y,z)=(2n,2n+2,1,2^n+2^{n+1})$, $(2n+2,2n-2,1,2^{n-1}+2^{n+2})$ for any $n\in \mathbb{N}$.

\end{theorem}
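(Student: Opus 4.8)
The plan is to solve \eqref{eq4.1} by reducing modulo small powers of $2$ and by separating the analysis according to whether $y=0$ or $y\ge 1$. First, if $y=0$ then the equation becomes $2^x+1=z^2$, which by Lemma \ref{lm1} forces $(x,z)=(3,3)$; this yields the sporadic solution $(x,y,z)=(3,0,3)$. So from now on I would assume $y\ge 1$, and the goal is to show that $x$, $y$, $z$, and the parameter $k$ are forced into the two infinite families listed.

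For $y\ge 1$ the right side $z^2=2^x+2^{ky}5^y$ is even, so $z$ is even and $x\ge 2$ (one checks $x=0,1$ give contradictions mod $4$). Write $x=2\mu$ tentatively and compare $2$-adic valuations: since $v_2(2^x)=x$ and $v_2((2^k5)^y)=ky$ and these must combine to an even number $v_2(z^2)$, the key dichotomy is whether $x<ky$, $x>ky$, or $x=ky$. The case $x=ky$ I would rule out quickly: then $z^2=2^x(1+5^y)$, so $1+5^y$ must be $2^{\text{even}}$ times a square, and examining $1+5^y\pmod 8$ (it is $\equiv 6\pmod 8$ for $y$ odd and $\equiv 2\pmod 8$ for $y$ even... actually $5^y\equiv 5,1\pmod 8$) shows $v_2(1+5^y)=1$, so $2^x(1+5^y)$ has odd $2$-adic valuation unless $x$ is odd, and a further square obstruction finishes this case. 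In the remaining two cases one of the two summands has strictly smaller valuation; factoring out $2^{\min(x,ky)}$ leaves an odd factor on one summand, forcing $\min(x,ky)=v_2(z^2)$ to be even and the complementary bracket $1+2^{|x-ky|}5^{y\ \text{or}\ 0}$ to be an \emph{odd} perfect square.

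The heart of the argument is then a Nagell–Ljunggren / elliptic-curve type descent on these bracket equations. In the subcase $ky<x$, writing $z=2^{ky/2}w$ reduces to $w^2=5^y+2^{x-ky}$; in the subcase $x<ky$, writing $z=2^{x/2}w$ reduces to $w^2=1+2^{ky-x}5^y$. I expect $y=1$ to be forced here: for $y\ge 2$, reducing $w^2-2^{x-ky}=5^y$ or $w^2-2^{ky-x}5^y=1$ modulo $16$ or modulo $5$ and $25$ should eliminate all but finitely many configurations, and the genuinely hard residual cases (where the exponent of $2$ is small) I would pin down using Lemma \ref{lm5.1} (the equation $5^x=4+y^2$) and Lemma \ref{lm5.2} ($2\cdot 5^x=1+y^2$), exactly as those lemmas were packaged for this purpose, together with Acu's result on $2^x+5^y=z^2$. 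Once $y=1$ is established the equation is $2^x+2^k\cdot 5=z^2$, i.e. $z^2-2^x=5\cdot 2^k$; setting $z=2^t w$ with $w$ odd and factoring $(w-2^{x-2t})(w+2^{x-2t})$ or handling the parallel factorization gives a short finite case check that produces precisely $x-2=k$ or $k-2=x$, i.e. the two families $(k,x,y,z)=(2n,2n+2,1,2^n+2^{n+1})$ and $(2n+2,2n-2,1,2^{n-1}+2^{n+2})$; one should also double-check the small-$n$ boundary (e.g. $n=0$ or $n=1$ in the second family, where $x=2n-2$ could be negative) to confirm the stated ranges.

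The main obstacle I anticipate is the descent step for $y\ge 2$: the congruence sieve modulo powers of $2$ and modulo powers of $5$ will knock out most exponent patterns, but the low-exponent exceptional cases require genuine Diophantine input, and the delicate point is to set up the substitutions $X=5^k,\ 5^{k+1},\ 5^{k+2}$ (and their doubled analogues) so that every residual equation lands exactly on one of the three Mordell curves $y^2=x^3-4$, $y^2=x^3-100$, $y^2=x^3-2500$ covered by Lemma \ref{lmCurve}, rather than on some curve whose integral points are not tabulated. Getting the case bookkeeping clean — so that no exponent class modulo $3$ or modulo $2$ is overlooked — is where the real care is needed; the rest is routine.
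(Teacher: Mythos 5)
Your overall strategy (split on $y=0$ versus $y\ge 1$, then compare the $2$-adic valuations $x$ and $ky$ and analyze the three cases $x<ky$, $x=ky$, $x>ky$) is a workable and genuinely different route from the paper, which instead reduces modulo $5$ to force $x=2m$ and then factors $(z-2^m)(z+2^m)=2^{ky}5^y$ with a gcd analysis, closing every case with Lemma \ref{ThCM}. But your sketch contains a step that would fail outright: you assert that for $y\ge 1$ the right-hand side is even, hence $z$ is even and $x\ge 2$ because ``$x=0,1$ give contradictions mod $4$.'' This is false for $x=0$: there $z^2=1+2^{ky}5^y$ is odd, there is no contradiction modulo $4$ (only the requirement $ky\ge 3$ coming from reduction modulo $8$), and in fact $x=0$ produces the listed solution $(k,x,y,z)=(4,0,1,9)$, namely the second family at $n=1$, since $1+80=81$. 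Taken literally, your opening reduction discards a solution that the theorem asserts. Relatedly, your closing relation ``$x-2=k$ or $k-2=x$'' is wrong for the second family, where $k-x=4$, not $2$ (note $2^x(1+2^4\cdot 5)=81\cdot 2^x$ is a square, while $2^x(1+2^2\cdot 5)=21\cdot 2^x$ is not).

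The other weakness is that the decisive steps are only gestured at, and with the wrong tools. In the branch $ky<x$ the equation $w^2=5^y+2^{x-ky}$ is settled in one line by Acu's theorem on $2^a+5^y=z^2$ (forcing $x-ky=2$ and $y=1$); in the branch $x<ky$ the equation $w^2=1+2^{ky-x}5^y$ is settled by factoring $(w-1)(w+1)$ and applying Lemma \ref{ThCM} (forcing $y=1$ and $ky-x=4$); and in the branch $x=ky$ the equation $2w^2=1+5^y$ dies modulo $5$. None of Lemma \ref{lm5.1}, Lemma \ref{lm5.2}, or the Mordell curves of Lemma \ref{lmCurve} is relevant here (Lemma \ref{lm5.2} concerns $2\cdot 5^x=1+y^2$, not $5^y=2w^2-1$); those are the tools for Theorem \ref{4.3}. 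If you repair the $x=0$ oversight and actually execute the three reductions just described, your valuation-based argument does give a correct alternative proof.
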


\begin{proof}
If $y=0$, we have $(x,y,z)=(3,0,3)$ by Lemma \ref{lm1}.

For $y\geq 1$, by Equation \ref{eq4.1}, we have $2^x\equiv z^2 \pmod 5$. Therefore $(\frac{2}{5})^x=(-1)^x=1$ and $x=2m$. We rewrite Equation \ref{eq4.1} as 

$$(z-2^m)(z+2^m)=2^{ky}5^y.$$

Obviously, both $z-2^m$ and $z+2^m$ are even since $ky\geq 1$. Let $d=\gcd(z-2^m, z+2^m)$, then $d\mid 2^{m+1}$ and $d^2\mid 2^{ky}5^y$. So, $d=2^n$, $1\leq n\leq \min \{m+1, \lfloor\frac{ky}{2}\rfloor \}.$

We have the following three possibilities since $z-2^m<z+2^m$ and $n\leq ky-n$.

Case 1: $z-2^m=2^n$ and $z+2^m=2^{ky-n}5^y$.

In this case, we obtain $2^{m+1}=2^{ky-n}5^y-2^n$, hence, $2^{m+1-n}=2^{ky-2n}5^y-1$. It is clear that $m+1-n=0$ is impossible. For $m+1-n\geq 1$, $2^{m+1-n}$ is even so is $2^{ky-2n}5^y-1$. Therefore, we have $ky-2n=0$. So, we obtain
$2^{m+1-n}=5^y-1$ or $5^y-2^{m+1-n}=1$. By Lemma \ref{ThCM}, we get $y=1$ and $m+1-n=2$. Hence, $(k,x,y,z)=(2n,2n+2,1,2^n+2^{n+1})$.

Case 2: $z-2^m=2^{ky-n}$ and $z+2^m=2^{n}5^y$.

In this case, we have $2^{m+1-n}=5^y-2^{ky-2n}$.

If $m+1-n=0$, then $5^y-2^{ky-2n}=1$. By Lemma \ref{ThCM}, we get $y=1$ and $ky-2n=2$. Hence, $(k,x,y,z)=(2n+2,2n-2,1,2^{n-1}+2^{n+2}).$

If $m+1-n\geq 1$, then $ky=2n$ since $5^y-2^{ky-2n}$ must be even. Exactly same to Case 1, we obtain the solution of Case 1 again.

Case 3: $z-2^m=2^n5^y$ and $z+2^m=2^{ky-n}$.

In this case, we have $2^{m+1-n}=2^{ky-2n}-5^y$.

If $m+1-n=0$, then $2^{ky-2n}-5^y=1$ which is impossible by Lemma \ref{ThCM}.

If $m+1-n\geq 1$, then $ky=2n$ since $2^{ky-2n}-5^y$ must be even. We have $5^y+2^{m+1-n}=1$ which is absurd.

\end{proof}

\begin{theorem}\label{4.2}
For $k\geq 0$, the Diophantine equation 
\begin{equation}\label{eq4.2}
2^x-(2^{k}5)^y=z^2
\end{equation}
 has only non-negative solutions $(x,y,z)=(0,0,0), (1,0,1).$ 
\end{theorem}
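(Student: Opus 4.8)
The plan is to split on the value of $y$. If $y=0$ the equation reduces to $2^x-1=z^2$, equivalently $z^2+1=2^x$, which I would dispose of by parity: when $z$ is even $z^2+1$ is odd, forcing $x=0$ and $z=0$; when $z$ is odd $z^2\equiv 1\pmod 4$, so $z^2+1\equiv 2\pmod 4$, forcing $x=1$ and $z=1$. This recovers exactly the two asserted solutions $(x,y,z)=(0,0,0)$ and $(1,0,1)$, and it covers all $k\ge 0$ at once since $(2^k5)^0=1$.

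So the real content is to show there is no solution with $y\ge 1$. Here $z^2=2^x-(2^k5)^y\ge 0$ gives $2^x\ge (2^k5)^y\ge 5$, hence $x\ge 3$; in particular $2^x\equiv 0\pmod 4$. I would first treat $k=0$: writing the equation as $z^2+5^y=2^x$ and using $5^y\equiv 1\pmod 4$ together with $z^2\equiv 0$ or $1\pmod 4$, the left-hand side is $\equiv 1$ or $2\pmod 4$, contradicting $2^x\equiv 0\pmod 4$.

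For $k\ge 1$, so that $ky\ge 1$, I would argue as follows. If $x\le ky$ then $2^x-2^{ky}5^y\le 2^{ky}(1-5^y)<0$, impossible, so $x>ky$; set $m=x-ky\ge 1$. Then $z^2=2^{ky}(2^{m}-5^y)$, and since $m\ge 1$ the factor $2^m-5^y$ is odd, so $2^{ky}$ is exactly the power of $2$ dividing $z^2$. Hence $ky$ is even and $2^m-5^y$ must be a perfect square, say $w^2$ with $w$ odd (and $w\ne 0$, as $2^m=5^y$ is impossible). Now $2^m=w^2+5^y\equiv 1+1\equiv 2\pmod 4$ forces $m=1$, whence $2=w^2+5^y\ge 1+5=6$, a contradiction. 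One could instead invoke Lemma \ref{ThCM} once $2^m-5^y=w^2$ is in hand, but the $\pmod 4$ step is quicker.

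This proof needs no elliptic curves, in contrast with Theorem \ref{4.1}. The only steps that require attention are in the $k\ge 1$ case: ruling out $x\le ky$ by the sign estimate, then correctly reading off that $v_2(z^2)=ky$ so as to reduce to a ``square plus $5^y$ equals a power of $2$'' equation, and finally applying the mod~$4$ obstruction; everything else is immediate.
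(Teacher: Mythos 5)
Your proof is correct, but it follows a genuinely different route from the paper's. The paper first pins down the parity of $x$ by reducing modulo $5$ and using $\left(\frac{2}{5}\right)=-1$ to get $x=2m$, then factors $(2^m-z)(2^m+z)=2^{ky}5^y$, analyzes $\gcd(2^m-z,2^m+z)$, and disposes of the resulting three cases by appealing to the Catalan--Mih\u{a}ilescu theorem (Lemma \ref{ThCM}); this mirrors the machinery used for Theorem \ref{4.1}, so the two proofs read uniformly. You instead use the sign of $z^2$ to force $x>ky$, read off the exact $2$-adic valuation $v_2(z^2)=ky$ to reduce to $w^2+5^y=2^{x-ky}$ with $w$ odd, and finish with a single congruence modulo $4$ (handling $k=0$ by the same modulo-$4$ obstruction directly). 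All the steps check out: the exclusion of $x\le ky$, the oddness of $2^m-5^y$, the impossibility of $z=0$ there, and the final $2\ge 6$ contradiction are each sound, and your $y=0$ analysis recovers exactly the two listed solutions. Your argument is shorter and entirely elementary --- it needs neither quadratic residues nor Catalan--Mih\u{a}ilescu nor the three-way gcd case split --- at the cost of not paralleling the template used elsewhere in the section.
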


\begin{proof}
For $x=0,1$, we get two solutions $(x,y,z)=(0,0,0), (1,0,1).$

We show Equation \ref{eq4.2} has no solutions when $x\geq 2$.

If $y=0$, we have $-1\equiv z^2 \pmod 4$ which is impossible.

For $y\geq 1$, we have $2^x\equiv z^2 \pmod 5$. So, we get $(\frac{2}{5})^x=(-1)^x=1$ and $x=2m$ with $m\geq 1$. We rewrite Equation \ref{eq4.2} as 

$$(2^m-z)(2^m+z)=2^{ky}5^y.$$

Let $d=\gcd(2^m-z, 2^m+z)$, then $d\mid 2^{m+1}$ and $d^2\mid 2^{ky}5^y$. So, $d=2^n$, $0\leq n\leq \min \{m+1, \lfloor\frac{ky}{2}\rfloor \}.$

We consider the following three possibilities.

Case 1: $2^m-z=2^n$ and $2^m+z=2^{ky-n}5^y$.

In this case, we obtain $2^{m+1}=2^{ky-n}5^y+2^n$, hence, $2^{m+1-n}=2^{ky-2n}5^y+1$. It is clear that $m+1-n=0$ is impossible. For $m+1-n\geq 1$, $2^{m+1-n}$ is even and so is $2^{ky-2n}5^y+1$. Therefore, we have $ky-2n=0$. So, we obtain
$2^{m+1-n}=5^y+1$ or $2^{m+1-n}-5^y=1$ which has no solutions by Lemma \ref{ThCM} since $y\geq 1$.

Case 2:  $2^m-z=2^{ky-n}$ and $2^m+z=2^{n}5^y$.

The proof is almost identical to case 1. There is no solutions.

Case 3: $2^m-z=2^{n}5^y$ and $2^m+z=2^{ky-n}$.

The proof is almost identical to case 1. There is no solutions.

\end{proof}

\begin{theorem}\label{4.3}
For $k\geq 0$, the Diophantine equation 
\begin{equation}\label{eq4.3}
-2^x+(2^{k}5)^y=z^2
\end{equation}
 has only non-negative solutions $(x,y,z)=(0,0,0)$ and 

$(k,x,y,z)=(n-1, 2n+2, 2, 3\times 2^{n-1})$, $(2n-1, 2n-2, 1, 3\times 2^{n-1})$, $(2n-2, 2n-2, 1, 2^n)$, $(2n-2, 2n, 1, 2^{n-1})$, $(2n-2, 6n-4, 3, 11\times 8^{n-1})$ for any $n\in \mathbb{N}$.

\end{theorem}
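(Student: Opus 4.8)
The plan is to mirror the structure of the proofs of Theorems~\ref{4.1} and~\ref{4.2}, using the factorization trick together with Lemmas~\ref{ThCM}, \ref{lm5.1}, \ref{lm5.2}, and \ref{lmk}. First I would dispose of the degenerate cases: if $y=0$ then $-2^x+1=z^2$, so $2^x=(1-z)(1+z)$ forces $x=0$, $z=0$; if $x=0$ then $(2^k5)^y-1=z^2$, which by Lemma~\ref{lmk} (applied with the prime $5$ in the role of $2p+1$, i.e. $p=2$) and a separate check of the $k=0$ subcase via Lemma~\ref{lm1} gives the stated solutions with $k$ even. So assume $x\geq 1$, $y\geq 1$.

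Next I would reduce mod small moduli to constrain parities. Looking at Equation~\ref{eq4.3} modulo $2^x$ (when $k\geq 1$) or modulo $5$ gives $(2^k5)^y\equiv z^2\pmod{2^{\min(x,\dots)}}$; more usefully, reducing modulo $4$ shows that when $x\geq 2$ we need $(2^k5)^y\equiv z^2\pmod 4$, and reducing modulo $5$ (when $5\nmid z$) together with $(\tfrac{-2}{5})^x$ controls the parity of $x$. The cleanest route is probably to separate according to whether $k=0$ or $k\geq 1$, and in the $k\geq 1$ case to write, after determining that $x=2m$ is even (from $(2^k5)^y - z^2 = 2^x$ read mod $5$, using $(\tfrac{2}{5})=(-1)$), the factorization
\[
(2^k5)^y - 2^{2m} = z^2 \quad\Longrightarrow\quad \bigl((2^k5)^{\,y} \text{ as a difference of squares when } y \text{ even}\bigr),
\]
but in fact the better factorization comes from treating $z^2 + 2^x = (2^k5)^y$ as $(z - \cdot)(z+\cdot)$ only when the right side is a perfect power; instead I would write $2^x = (2^k5)^y - z^2$ and, when $y$ is \emph{even}, $y=2t$, factor $2^x = \bigl((2^k5)^t - z\bigr)\bigl((2^k5)^t + z\bigr)$, extract the gcd as a power of $2$, and solve the resulting system, which via Lemma~\ref{ThCM} pins down $t=1$ and produces the families with $y=2$. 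When $y$ is \emph{odd}, I would instead reduce to one of the Mordell-type equations: writing $x=2m$ and $k y = $ (even or odd), the equation becomes $5^y = 2^{x-ky} + (z/2^{\lfloor ky/2\rfloor})^2$ after pulling out powers of $2$, i.e. an equation of the shape $5^y = 4 + Y^2$ or $2\cdot 5^y = 1 + Y^2$ handled by Lemmas~\ref{lm5.1} and~\ref{lm5.2} — this is where the solutions with $y=1$ and $y=3$ (the $11\times 8^{n-1}$ family, coming from $5^3 = 4 + 11^2$) originate.

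So the skeleton is: (a) handle $y=0$ and $x=0$; (b) for $x,y\geq 1$ force $x=2m$ even by a Legendre symbol argument mod $5$ (after checking $5\nmid z$, which holds since $5\mid z$ would force $5\mid 2^x$); (c) split on the parity of $ky$ and on whether the extracted common factor $2^n$ satisfies $n < ky-n$ or $n=ky-n$, giving finitely many cases $z\mp 2^m \in \{2^n,\ 2^n5^y,\ 2^{ky-n},\ 2^{ky-n}5^y\}$; (d) in each case reduce the resulting exponential equation either to Catalan–Mihăilescu (Lemma~\ref{ThCM}), yielding the $3\times 2^{n-1}$, $2^n$, and $2^{n-1}$ families, or to one of the elliptic/Mordell equations of Lemmas~\ref{lm5.1}, \ref{lm5.2} (via Lemma~\ref{lmCurve}), yielding the $11\times 8^{n-1}$ family; (e) collect the surviving solutions and match them against the stated list, checking that each claimed quadruple indeed satisfies Equation~\ref{eq4.3} and that the re-indexing in terms of $n\in\mathbb N$ is correct. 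The main obstacle I anticipate is the bookkeeping in step (c)–(d): unlike Theorems~\ref{4.1} and~\ref{4.2} where the sign made most cases collapse to "impossible," here the $+$ sign on $(2^k5)^y$ lets several cases survive with genuine solution families, so one must be careful to (i) correctly track the constraint $n\leq\min\{m+1,\lfloor ky/2\rfloor\}$, (ii) not miss the $y=2$ and $y=3$ branches (which do not appear in the earlier theorems), and (iii) verify that the three sporadic Ljunggren exceptions in Lemma~\ref{ThNL} and the sporadic points in Lemma~\ref{lmCurve} contribute nothing extra beyond the $11\times 8^{n-1}$ family. A secondary subtlety is the $k=0$ case, where $2^k5 = 5$ is odd and the gcd argument changes ($d\mid 2^{m+1}$ but now $d$ could be odd only if... actually $z$ and $2^m$ have opposite parity unless both even), so I would treat $k=0$ by directly citing Acu's result $2^x + 5^y = z^2$ recast, or re-deriving: $5^y - z^2 = 2^x$ with $x\geq 1$ forces $z$ odd, $5^y$ odd, contradiction mod $2$ unless $x=0$ — so in fact $k=0$, $x\geq 1$ gives nothing, consistent with the list (every family has $k$ of the form $n-1$, $2n-1$, or $2n-2$ with $n\geq 1$, hence $k\geq 0$, and $k=0$ occurs only with $x=0$ from the $x=0$ analysis or... one should double-check the family $(2n-2,2n-2,1,2^n)$ at $n=1$ gives $k=0,x=0$, matching the $x=0$ case).
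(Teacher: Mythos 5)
Your second paragraph does capture the strategy that actually works (and is the one the paper uses): for $y$ even, factor $2^x=\bigl((2^k5)^{y/2}-z\bigr)\bigl((2^k5)^{y/2}+z\bigr)$ and invoke Lemma~\ref{ThCM}; for $y$ odd, compare $2$-adic valuations according to whether $ky>x$, $ky=x$ or $ky<x$, reducing to $2\cdot 5^y=1+Y^2$, $\frac{5^y-1}{5-1}=Y^2$, or $5^y=2^{x-ky}+Y^2$, handled by Lemmas~\ref{lm5.2}, \ref{ThNL} and \ref{lm5.1} respectively. However, the proposal as written contains three concrete errors that would derail an execution. First, step (c) of your skeleton posits cases of the form $z\mp 2^m\in\{2^n,\,2^n5^y,\,2^{ky-n},\,2^{ky-n}5^y\}$; that is the factorization of $z^2-2^{2m}=(2^k5)^y$ from Theorem~\ref{4.1}, and it is unavailable here because Equation~\ref{eq4.3} reads $(2^k5)^y=z^2+2^x$, a sum rather than a difference of squares. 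Second, for $x=0$ you cite Lemma~\ref{lmk}, which solves $z^2-(2^kq)^y=1$, whereas $x=0$ in Equation~\ref{eq4.3} gives $(2^k5)^y-z^2=1$, the opposite Catalan direction; the lemma does not apply (the paper needs no separate $x=0$ case, since $x=0$ is absorbed into the valuation argument).

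Third, and most seriously, your closing claim that ``$k=0$, $x\geq 1$ gives nothing'' by a ``contradiction mod $2$'' is false: in $5^y-z^2=2^x$ both $5^y$ and $z^2$ are odd, so the left-hand side is even and there is no contradiction. Indeed $(k,x,y,z)=(0,2,1,1)$, $(0,2,3,11)$ and $(0,4,2,3)$ are genuine solutions of Equation~\ref{eq4.3} --- the $n=1$ members of three of the listed families --- so following that reasoning you would discard solutions. (Note also that Acu's result concerns $2^x+5^y=z^2$, not $-2^x+5^y=z^2$, so it cannot be ``recast'' to cover $k=0$ here.) The Legendre-symbol argument forcing $x$ even and the $y$-even/$y$-odd split are sound and consistent with the paper, but steps (c), the $x=0$ reduction, and the $k=0$ discussion need to be replaced by the paper's direct comparison of $ky$ with $x$.
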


\begin{proof}
If $y=0$, then $(x,y,z)=(0,0,0)$.

We consider $y\geq 1$ in the following.  

Case 1: $2\mid y$.

Let $y=2m$, $m\geq 1$. We rewrite Equation \ref{eq4.3} as

$$((2^k5)^m-z)((2^k5)^m+z)=2^x.$$

Since $m\geq 1$, then $z\neq 0$, we have $(2^k5)^m-z<(2^k5)^m+z$. 

Therefore, we obtain $(2^k5)^m-z=2^t$ and $(2^k5)^m+z=2^{x-t}$ with $0 \leq t<x-t$. Adding these two equations, we receive 
$2^{km+1-t}5^m=1+2^{x-2t}$. The right side of this equation is odd, so we have $km+1-t=0$. Hence, $5^m=1+2^{x-2t}$, or $5^m-2^{x-2t}=1$. 
By Lemma \ref{ThCM}, we obtain $m=1$ and $x-2t=2$. So, $(k, x, y, z)=(t-1, 2t+2, 2, 3\times 2^{t-1})$ with $t\geq 1$. Let $n=t$, we have
$(k, x,y,z)=(n-1, 2n+2, 2, 3\times 2^{n-1})$.

Case 2: $2\nmid y$.

Let $z=2^tz_1$, $t\geq 0$ and $2\nmid z_1$.

Case 2.1: $ky>x$.

In this sub case, we rewrite Equation \ref{eq4.3} as 

$$2^x(2^{ky-x}5^y-1)=2^{2t}z_1^2.$$

We receive $x=2t$ and $2^{ky-x}5^y-1=z_1^2.$ If $ky-x\geq 2$, we get a contradiction by taking modulo 4. So, $ky-x=1$. Then we have 
$2\times 5^y=1+z_1^2$. By Lemma \ref{lm5.2} and $2\nmid y$, we get $(y, z_1)=(1, 3)$. So, $(k,x,y,z)=(2t+1,2t,1,3\times 2^t)$. Let $n=t+1$, we get $(k,x,y,z)=(2n-1, 2n-2, 1, 3\times 2^{n-1})$.

Case 2.2: $ky=x$.

In this sub case, we rewrite Equation \ref{eq4.3} as 

$$2^{x+2}(5^{y-1}+\cdots+5+1)=2^{2t}z_1^2.$$

Since $2\nmid y$, we have $x+2=2t$ and $5^{y-1}+\cdots+5+1=z_1^2$. So, we get 

$$\frac{5^y-1}{5-1}=z_1^2.$$

By Lemma \ref{ThNL}, we know $y=1$ since $y$ is odd. Hence, $(k,x,y,z)=(2t-2,2t-2,1,2^t)$ with $t\geq 1$. Let $n=t$, we obtain 
$(k,x,y,z)=(2n-2, 2n-2, 1, 2^n)$.

Case 2.3: $ky<x$.

In this sub case, we rewrite Equation \ref{eq4.3} as 

$$2^{ky}(5^y-2^{x-ky})=2^{2t}z_1^2.$$

We obtain $ky=2t$ and 

$$5^y-2^{x-ky}=z_1^2.$$

Case 2.3.1:  $x-ky=1$.

We obtain $5^y=2+z_1^2$. Since $y\geq 1$, we have $z_1^2\equiv -2 \pmod 5$ which is impossible due to $(\frac{-2}{5})=-1$.

Case 2.3.2:  $x-ky=2$.

We obtain $5^y=4+z_1^2$. By Lemma \ref{lm5.1}, we have $(y,z_1)=(1,1),(3,11)$.

For $(y,z_1)=(1,1)$, we obtain $(k,x,y,z)=(2t,2t+2,1,2^t)$ with $t\geq 0$. Let $n=t+1$, we get $(k,x,y,z)=(2n-2, 2n, 1, 2^{n-1})$.

For $(y,z_1)=(3,11)$, from $ky=3k=2t$, we know $3\mid t$. Let $t=3t_1$, we get $(k,x,y,z)=(2t_1,6t_1+2,3,11\times 8^{t_1})$ with $t_1\geq 0$.
 Let $n=t_1+1$, we obtain $(k,x,y,z)=(2n-2, 6n-4, 3, 11\times 8^{n-1})$.

Case 2.3.3: $x-ky\geq 3$.

From $5^y-2^{x-ky}=z_1^2$, we get $5^y\equiv z_1^2 \pmod 8$. Since $y$ is odd, we obtain $5\equiv z_1^2 \pmod 8$ which is absurd.
\end{proof}

\section{On the equations $(-1)^{\alpha}p^x+(-1)^{\beta}(2^k(2p+1))^y=z^2$ for odd Sophie Germain prime $p$}

In this section, we study  four equations $(-1)^{\alpha}p^x+(-1)^{\beta}(2^k(2p+1))^y=z^2$ for odd Sophie Germain prime $p$. We have
\begin{theorem}\label{th1}
If $p\equiv 3, 5 \pmod 8$ is a Sophie Germain prime and $k\geq 0$, then equation
\begin{equation}\label{eq3.1}
p^x+(2^{2k+1}(2p+1))^y=z^2
\end{equation}
has only non-negative solutions $(p, x,y,z)=(3,1,0,2)$, $(p,k,x,y,z)=(3,2,0,1,15)$.
\end{theorem}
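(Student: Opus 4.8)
The plan is to separate the two boundary cases $y=0$ and $x=0$, and then to show that no solution exists when both $x\ge 1$ and $y\ge 1$. Throughout, set $q=2p+1$, which is prime by hypothesis and satisfies $q\equiv 1\pmod p$. If $y=0$ the equation reads $p^x+1=z^2$, so Lemma~\ref{lm1}, together with the oddness of $p$, forces $(p,x,z)=(3,1,2)$, giving the solution $(p,x,y,z)=(3,1,0,2)$. If $x=0$ the equation reads $1+(2^{2k+1}q)^y=z^2$; since $2k+1\ge 1$, Lemma~\ref{lmk} applies with its parameter taken equal to $2k+1$, and because $2k+1$ is odd the only admissible value is $2k+1=5$, which yields $(p,k,x,y,z)=(3,2,0,1,15)$, and indeed $p=3\equiv 3\pmod 8$.

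So assume from now on that $x\ge 1$ and $y\ge 1$. The first step is a reduction modulo $p$: since $q\equiv 1\pmod p$ and $x\ge 1$, the equation gives $z^2\equiv 2^{(2k+1)y}\pmod p$, and $p\nmid z$ (otherwise $p$ would divide $2^{2k+1}q$, which is impossible since $p$ is odd and $q\equiv 1\pmod p$). Hence $2^{(2k+1)y}$ is a nonzero quadratic residue modulo $p$, and since $\left(\frac{2}{p}\right)=-1$ for $p\equiv 3,5\pmod 8$, the exponent $(2k+1)y$ must be even, so $y=2m$ with $m\ge 1$. Putting $w=(2^{2k+1}q)^m$, we obtain $(z-w)(z+w)=p^x$ with $z>w\ge 1$; the factors $z-w$ and $z+w$ are coprime powers of $p$ (any common prime divisor would be $p$, forcing $p\mid w$, which is impossible), so $z-w=1$ and $z+w=p^x$. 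Subtracting gives the key identity
\[ p^x-1 \;=\; 2^{(2k+1)m+1}\,q^m, \qquad m\ge 1. \]

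The remaining task is to contradict this identity, splitting on the parity of $x$. If $x$ is odd, then $v_2(p^x-1)=v_2(p-1)$ because the cofactor $p^{x-1}+\cdots+p+1$ is odd; this valuation equals $1$ when $p\equiv 3\pmod 8$, contradicting $(2k+1)m+1\ge 2$, and equals $2$ when $p\equiv 5\pmod 8$, which forces $k=0$ and $m=1$, so that $p^x=8p+5$; then $p\mid 5$, hence $p=5$, and $5^x=45$ has no solution. If $x$ is even, write $x=2n$ with $n\ge1$ and factor $p^x-1=(p^n-1)(p^n+1)$; these two factors have greatest common divisor $2$, so, $q$ being an odd prime, all of $q^m$ divides exactly one of them while the other is a power of $2$, whence $p^n=2^c-1$ or $p^n=2^c+1$ for some $c\ge 1$. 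For $n\ge 2$ the Catalan--Mih\u{a}ilescu Theorem (Lemma~\ref{ThCM}) leaves only $3^2=2^3+1$, while for $n=1$ the conditions $p=2^c\pm 1$ and $p\equiv 3,5\pmod 8$ force $p\in\{3,5\}$; in each of these finitely many survivors one has $p^x-1\in\{8,24,80\}$, and the relevant value $q=2p+1$ (namely $7$ or $11$) does not divide $p^x-1$, a contradiction. Hence no solution with $x\ge 1$ and $y\ge 1$ exists, which completes the argument.

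I expect the main obstacle to be the even-$x$ subcase: one must run the coprime factorization of $p^{2n}-1$ cleanly, apply the Catalan--Mih\u{a}ilescu Theorem to the relation $p^n=2^c\pm 1$ while separately handling the degenerate case $n=1$ (where $p$ is a Mersenne or a Fermat-type prime) and checking that the hypotheses of Lemma~\ref{ThCM} are met, and finally eliminate the few surviving pairs $(p,x)$ by exploiting the precise value $q=2p+1$. By comparison, the modular reduction that forces $y$ to be even and the difference-of-squares step are essentially routine.
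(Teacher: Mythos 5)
Your proof is correct, and it follows the paper's argument essentially verbatim up to the key identity $p^x-1=2^{(2k+1)m+1}q^m$: the boundary cases via Lemmas \ref{lm1} and \ref{lmk}, the reduction modulo $p$ using $(\tfrac{2}{p})=-1$ to force $y$ even, and the coprime factorization $(z-w)(z+w)=p^x$ are exactly the paper's steps. The endgame, however, is organized differently. The paper first disposes of $(k,n)=(0,1)$ by hand, then reads the identity modulo $8$ to force $x$ even, splits $(p^{x/2}-1)(p^{x/2}+1)=2^{(2k+1)n+1}q^n$ into three cases, and applies Catalan--Mih\u{a}ilescu to the resulting $q$-side equations $2^{(2k+1)n-1}-q^n=\pm 1$, concluding from the primality of $q=2^{2k}\mp 1$. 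You instead split on the parity of $x$: your $2$-adic valuation argument for odd $x$ cleanly subsumes both the paper's special case $(k,n)=(0,1)$ and its mod-$8$ step, and for even $x$ you apply Catalan to the $p$-side relations $p^{x/2}=2^c\pm 1$, which leaves the finite list of candidates $(p,x)\in\{(3,2),(3,4),(5,2)\}$ that you then eliminate by checking $q\nmid p^x-1$ --- a small extra verification the paper's route avoids, but one you carry out correctly. Both arguments are sound; yours trades the paper's three-way case analysis for a parity split plus a finite check, and is arguably a little tidier in the odd-$x$ branch.
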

\begin{proof}
Let $q=2p+1$, then $q$ is a prime and greater than or equal to 7.

When $x=0$,  by Lemma \ref{lmk}, Equation \ref{eq3.1} has only solution $(p,2k+1,y,z)=(3,5,1,15)$, so, $(p,k,x,y,z)=(3,2,0,1,15)$.

When $y=0$, by Lemma \ref{lm1}, $(p,x,z)=(3,1,2)$. So, $(p, x,y,z)=(3,1,0,2)$.

Now, we assume $x, y \geq 1$. In Equation \ref{eq3.1}, by taking modulo $p$, we get

\[
(2^{2k+1})^y\equiv z^2 \pmod p.
\]

We have $(\frac{(2^{2k+1})^y}{p})=(\frac{2^y}{p})=(-1)^y=(\frac{z^2}{p})=1.$

So, $y=2n$, $n\geq 1$. Equation \ref{eq3.1} can be written as

$$(z-(2^{2k+1}q)^n)(z+(2^{2k+1}q)^n)=p^x.$$

Let $d=\gcd(z-(2^{2k+1}q)^n, z+(2^{2k+1}q)^n)$, then $d\mid 2(2^{2k+1}q)^n=2^{(2k+1)n+1}q^n$, $d\mid p^x$. So, $d=1$.

We obtain $(z-(2^{2k+1}q)^n)=1$ and $(z+(2^{2k+1}q)^n)=p^x$. Therefore,

$$2(2^{2k+1}q)^n=p^x-1.$$

If $k=0$ and $n=1$, we obtain $4q=p^x-1$, then $4\equiv -1 \pmod p$, hence, $p=5$ and $45=5^x$ which is impossible.

So, we have either $k\geq 1$ or $n\geq 2$. Taking modulo 8 in $2(2^{2k+1}q)^n=p^x-1$, we obtain $p^x\equiv 1 \pmod 8$. So, 
$x=2m$, $m\geq 1$. We rewrite $2(2^{2k+1}q)^n=p^x-1$ as

$$(p^m-1)(p^m+1)=2^{(2k+1)n+1}q^n.$$

There are three possibilities since $\gcd(p^m-1, p^m+1)=2$.

Case 1: $p^m-1=2$ and $p^m+1=2^{(2k+1)n}q^n$.

In this case, we get $2=2^{(2k+1)n}q^n-2$ which is absurd.

Case 2: $p^m-1=2q^n$ and $p^m+1=2^{(2k+1)n}$.

In this case, we get $2^{(2k+1)n-1}-q^n=1$. By Lemma \ref{ThCM}, we have $n=1$. So, $q=2^{2k}-1$, $2k$ must be a prime. We obtain $k=1$ and $p=1$ which is absurd.

Case 3: $p^m-1=2^{(2k+1)n}$ and $p^m+1=2q^n$.

In this case, we get $q^n-2^{(2k+1)n-1}=1$ with $q\geq 7$. Again, by Lemma \ref{ThCM}, we have $n=1$ and $q=2^{2k}+1$. Hence, $p=2^{2k-1}$ which is a contradiction since $p$ is an odd prime.
\end{proof}

\begin{theorem}\label{th2}
If $p\equiv 3 \pmod 8$ is a Sophie Germain prime and $k\geq 1$, then equation
\begin{equation}\label{eq3.2}
p^x+(2^{2k}(2p+1))^y=z^2
\end{equation}
has only non-negative solutions $(p, x,y,z)=(3,1,0,2)$, $(p,k,x,y,z)=(3,2,6,1,29),$
$(3,2,2,1,11), (3,3,4,1,23)$.
\end{theorem}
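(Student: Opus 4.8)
The plan is to mirror the proof of Theorem~\ref{th1}. Write $q=2p+1$, a prime with $q\ge 7$; since $p\equiv 3\pmod 8$ we have $q\equiv 7\pmod{16}$, hence $q\equiv 7\pmod 8$ and $q\equiv 3\pmod 4$. First dispose of the degenerate exponents. When $x=0$ the equation reads $1+(2^{2k}q)^y=z^2$, and Lemma~\ref{lmk}, applied with $2k$ in place of $k$, forces $(p,2k)\in\{(2,4),(3,5)\}$; both are impossible because $p$ is odd and $2k$ is even, so there is no solution. When $y=0$ the equation reads $p^x+1=z^2$, and Lemma~\ref{lm1} gives $(p,x,z)=(3,1,2)$, the listed solution $(3,1,0,2)$.

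Now suppose $x,y\ge 1$. The decisive difference from Theorem~\ref{th1} is that reducing modulo $p$ gives nothing, since $(2^{2k})^y$ is already a square; instead I would reduce modulo $q$. From $q\equiv 7\pmod 8$ we get $\bigl(\tfrac{2}{q}\bigr)=1$ and $\bigl(\tfrac{-1}{q}\bigr)=-1$, so $\bigl(\tfrac{p}{q}\bigr)=\bigl(\tfrac{(q-1)/2}{q}\bigr)=\bigl(\tfrac{-1}{q}\bigr)\bigl(\tfrac{2}{q}\bigr)=-1$; as $q\nmid z$ (else $q\mid p^x$), the congruence $p^x\equiv z^2\pmod q$ forces $x$ even, say $x=2m$ with $m\ge 1$. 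Since $k\ge 1$ makes $(2^{2k}q)^y$ even, $z$ is odd, so in $(z-p^m)(z+p^m)=2^{2ky}q^y$ the gcd of the two factors divides $2p^m$, is prime to $p$, and is even, hence equals $2$; therefore the whole power of $2$ and the whole power of $q$ each go to one factor, giving three cases: (i) $z-p^m=2$, $z+p^m=2^{2ky-1}q^y$; (ii) $z-p^m=2q^y$, $z+p^m=2^{2ky-1}$; (iii) $z-p^m=2^{2ky-1}$, $z+p^m=2q^y$.

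In each case I would add and subtract to get a Pillai-type relation and then bound the exponents $k,y$ by $2$-adic reductions --- this is exactly where $p\equiv 3\pmod 8$ enters. In case~(i) one finds $p^m+1=2^{2ky-2}q^y$; here $ky=1$ would give $p^m=2p$, so $ky\ge 2$, reduction mod $4$ then gives $m$ odd, and reduction mod $8$ (valid once $ky\ge 3$) forces $p^m\equiv 7\pmod 8$, contradicting $p^m\equiv 3\pmod 8$; hence $ky=2$, and the possibilities $(k,y)=(1,2)$ (where $p^m=(2q-1)(2q+1)$ is a product of coprime integers $>1$, impossible for a prime power) and $(k,y)=(2,1)$ (where $p^m=8p+3$ forces $p\mid 3$, so $p=3$, $3^m=27$) leave only $(3,2,6,1,29)$. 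In case~(iii) one finds $p^m+2^{2ky-2}=q^y$; again $ky\ge 2$, mod $4$ gives $m\equiv y\pmod 2$, and mod $8$ (once $ky\ge 3$) gives $m,y$ both even, so writing $m=2m'$, $y=2y'$ and factoring $(q^{y'}-p^{m'})(q^{y'}+p^{m'})=2^{2ky-2}$ forces $q^{y'}=2^{2ky-4}+1$, impossible modulo $p$; hence $ky=2$, and the two subcases leave only $(3,2,2,1,11)$. Case~(ii), $p^m+q^y=2^{2ky-2}$, is the subtlest: a size comparison forces $ky\ge 3$, reduction mod $8$ then forces $m$ even and $y$ odd, and writing $m=2m'$, $2^{2ky-2}=(2^{ky-1})^2$, I would factor $q^y=(2^{ky-1}-p^{m'})(2^{ky-1}+p^{m'})$ into coprime parts to get $2^{ky-1}-p^{m'}=1$ and $2^{ky-1}+p^{m'}=q^y$, whence $q^y+1=2^{ky}$; Lemma~\ref{ThCM} forces $y=1$, after which $q+1=2^k$ together with $p\equiv 3\pmod 8$ forces $p=3$, $k=3$, giving $(3,3,4,1,23)$.

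The main obstacle I foresee is case~(ii). Without the ``Legendre symbol mod $p$'' shortcut one has to iterate the factorisation after extracting $x=2m$, and it is the second factorisation in case~(ii) that converts the problem into the Catalan equation $2^{ky}-q^y=1$ resolved by Lemma~\ref{ThCM}. Organising the $2$-adic reductions so that they really do pin down the parities of $m$ and $y$ and force $ky$ small --- all of which rests on $q\equiv 7\pmod{16}$ --- is the delicate part; the rest is routine.
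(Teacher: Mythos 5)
Your proposal is correct and follows essentially the same route as the paper: the same treatment of $x=0$ and $y=0$, the same Legendre-symbol computation $\bigl(\tfrac{p}{q}\bigr)=-1$ to force $x=2m$, the same three-way splitting of $(z-p^m)(z+p^m)=2^{2ky}q^y$, and the same reduction of the hardest case to $2^{ky}-q^y=1$ via Lemma~\ref{ThCM}. The only (harmless) divergence is in the subcase eliminations, where you substitute reductions mod $4$ and mod $8$ together with elementary coprime factorizations for the paper's second round of Legendre-symbol computations modulo $q$, and you close the Catalan case by reading $p=2^{k-1}-1\equiv 3\pmod 8$ directly rather than via the primality of $k-1$ and $k$; both versions are valid.
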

\begin{proof}
Let $q=2p+1$, then $q$ is a prime and greater than or equal to 7.

When $x=0$,  by Lemma \ref{lmk}, Equation \ref{eq3.2} has no solution since $2k\neq 5$ and $p\neq 2$.

When $y=0$, by Lemma \ref{lm1},  $(p, x,y,z)=(3,1,0,2)$.

Now, we assume $x, y \geq 1$. In Equation \ref{eq3.2}, by taking modulo $q$, we get

\[
p^x\equiv z^2 \pmod q.
\]

Since $(\frac{p}{q})=(\frac{q}{p})(-1)^{\frac{p-1}{2}p}=(\frac{1}{p})(-1)^{\frac{p-1}{2}p}=-1$. We obtain $(-1)^x=1$ and $x=2m$ with $m\geq1$. Equation \ref{eq3.2} can be written as 

$$(z-p^m)(z+p^m)=2^{2ky}q^y.$$

We consider three cases.

Case 1: $z-p^m=2$ and $z+p^m=2^{2ky-1}q^y$.

In this case, we get $p^m=2^{2ky-2}q^y-1$. We consider three sub cases.

Case 1.1: $2ky-2\geq 3$.

We receive $p^m\equiv -1\pmod 8$ which is impossible since $p\equiv 3 \pmod 8$.

Case 1.2: $2ky-2=2$.

We have $ky=2$. There are two possibilities.

Case 1.2.1: $k=1$ and $y=2$.

We obtain $p^m=4q^2-1$, hence, $0\equiv 4-1 \pmod p$. So, $p=3$, $q=7$ and $3^m=4\times 7^2-1=195=3\times 5\times 13$ which is absurd.

Case 1.2.2: $k=2$ and $y=1$.

We obtain $p^m=4q-1=8p+3$, take modulo $p$ we know $p=3$. Hence, $3^m=27$, $m=3$, we get another solution $(p,k,x,y,z)=(3,2,6,1,29)$.

Case 1.3: $2ky-2=0$.

We have $k=y=1$ and $p^m=q-1=2p$ which is impossible since $p$ is odd.

Case 2: $z-p^m=2q^y$ and $z+p^m=2^{2ky-1}$.

In this case, we get $p^m=2^{2ky-2}-q^y$. Take modulo $q$ we obtain

$$p^m\equiv (2^{ky-1})^2 \pmod q.$$

So, we get $(-1)^m=1$ by computing the legendre symbol. Let $m=2m_1$ with $m_1\geq 1$. We rewrite $p^m=2^{2ky-2}-q^y$ as

$$(2^{ky-1}-p^{m_1})(2^{ky-1}+p^{m_1})=q^y.$$

So, we get $2^{ky-1}-p^{m_1}=1$ and $2^{ky-1}+p^{m_1}=q^y$. Adding these two equations, we receive $2^{ky}=q^y+1$, or $2^{ky}-q^y=1$. By Lemma \ref{ThCM}, $y=1$. Then we have $q=2^k-1$ and $p=2^{k-1}-1$. Both $p$ and $q$ are primes, so are $k-1$ and $k$. Therefore, $k-1=2$, $k=3$. With simple calculation we obtain another solution $(p,k,x,y,z)=(3,3,4,1,23)$.

Case 3: $z-p^m=2^{2ky-1}$ and $z+p^m=2q^y$. 

In this case, we get 

$$p^m=q^y-2^{2ky-2}$$. 

Take modulo $q$, we obtain $p^m\equiv -(2^{ky-1})^2 \pmod q.$

Computing the legendre symbol, we obtain $(-1)^m=-1$, so $2\nmid m$. We consider three sub cases.

Case 3.1: $2ky-2\geq 3$.

Take modulo 8 in equation $p^m=q^y-2^{2ky-2}$, we have $p^m\equiv q^y \pmod 8$. However, $p^m\equiv p\equiv 3 \pmod 8$ but $q^y\equiv 7^y\equiv 1,7 \pmod 8$. We receive a contradiction.

Case 3.2: $2ky-2=2$.

We have $ky=2$. There are two possibilities.

Case 3.2.1: $k=1$ and $y=2$.

We have $p^m=q^2-4$, $0\equiv 1-4 \pmod p$. So, $p=3$ and $3^m=45$ which is absurd.

Case 3.2.2: $k=2$ and $y=1$.

We have $p^m=q-4=2p-3$. So, $p=3$, $m=1$. We receive another solution $(p,k,x,y,z)=(3,2,2,1,11)$.

Case 3.3: $2ky-2=0$.

We obtain $k=y=1$, then $p^m=q-1=2p$ which is impossible.
\end{proof}

\begin{remark}\label{re3.1}
When $p\equiv 3 \pmod 4$ and $k=0$, the equation $p^x+(2^{2k}(2p+1))^y=z^2$ becomes $p^x+(2p+1)^y=z^2$. We did not include this case in Theorem \ref{th2}. The reason is it is a special case of the equation $p^x+(p+4k)^y=z^2$ where both $p$ and $p+4k$ are primes by letting $p+1=4k$. The equation $p^x+(p+4k)^y=z^2$ was discussed in \cite{Min2}.
\end{remark}

\begin{theorem}\label{th3}
If $p\equiv 3 \pmod 4$ is a Sophie Germain prime and $k\geq 0$, then equation
\begin{equation}\label{eq3.3}
p^x-(2^k(2p+1))^y=z^2
\end{equation}
has only non-negative solutions $(x,y,z)=(0,0,0)$, $(p,k,x,y,z)=(3,3,4,1,5)$.
\end{theorem}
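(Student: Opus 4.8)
The plan is to follow the pattern of Theorems~\ref{th1}--\ref{th2}: reduce to a factorization identity, split into a short tree of cases, and clear the leaves with the Catalan--Mih\u ailescu theorem (Lemma~\ref{ThCM}) together with elementary $2$-adic estimates. Throughout put $q=2p+1$, a prime $\geq 7$.

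First dispose of $xy=0$. If $y=0$ the equation is $z^2+1=p^x$; since $p\equiv 3\pmod 4$, $-1$ is a quadratic non-residue mod $p$, so $z=0$ and then $x=0$. If $x=0$ and $y\geq 1$ the left-hand side is $1-(2^kq)^y<0$. Hence the unique solution with $xy=0$ is $(0,0,0)$.

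Now take $x,y\geq 1$. Reducing mod $q$ gives $p^x\equiv z^2\pmod q$, and quadratic reciprocity (exactly as in the proof of Theorem~\ref{th2}, since $\tfrac{q-1}2=p$ is odd and $p\equiv 3\pmod 4$) gives $\left(\tfrac pq\right)=-1$; as $q\nmid z$ (otherwise $q\mid p^x$), we get $x=2m$. Then $(p^m-z)(p^m+z)=2^{ky}q^y$ with $p\nmid z$ (else $p\mid 2^{ky}q^y$), so $\gcd(p^m-z,p^m+z)\mid 2$. For $k=0$ this forces $p^m-z=1$, hence $q^y=2p^m-1\equiv-1\pmod p$, absurd since $q\equiv1\pmod p$. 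For $k\geq 1$ the gcd equals $2$, $ky\geq 2$, and writing the factors as $2u<2v$ with $uv=2^{ky-2}q^y$, $\gcd(u,v)=1$, leaves three branches: (A) $p^m=1+2^{ky-2}q^y$; (B)/(C) $p^m=2^{ky-2}+q^y$ with $z=|q^y-2^{ky-2}|$. In each branch, reducing mod $p$ gives $2^{ky-2}\equiv-1\pmod p$; since $-1$ is a non-residue this forces $\left(\tfrac 2p\right)=-1$ (so $p\equiv 3\pmod 8$) and $ky$ odd (so $k,y$ are both odd, $ky\geq 3$) --- in particular no solutions exist when $p\equiv 7\pmod 8$. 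A size comparison ($q>p$) gives $m\geq 2$, and a reduction mod $4$ separates $ky=3$ from $ky\geq 5$ and pins the parity of $m$. When $ky\geq 5$ one has $2^{ky-2}\equiv 0\pmod 8$, and a mod $8$ count (with $p\equiv 3$, $q\equiv 7\pmod 8$, $y$ odd) rules out (B)/(C). The case $ky=3$ means $(k,y)\in\{(3,1),(1,3)\}$, and in every branch reducing mod $p$ forces $p=3$; then $(k,y)=(3,1)$ in (B)/(C) gives $3^m=9$, i.e. the solution $(p,k,x,y,z)=(3,3,4,1,5)$, while the other three possibilities give $3^m\in\{15,\,687,\,345\}$, none a power of $3$.

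The one genuinely delicate point is branch (A) with $m$ even: here Lemma~\ref{ThCM} does not apply directly, since $2^{ky-2}q^y$ is not a perfect power, so I would re-factor $p^m-1=(p^{m/2}-1)(p^{m/2}+1)$ --- again coprime up to a factor $2$ --- and reduce to the equations $q^y=1+2^{ky-4}$ and $2^{ky-4}=1+q^y$. The first makes $q-1=2p$ a power of $2$, impossible for an odd prime. For the second, since $y$ is odd the cofactor $(q^y+1)/(q+1)$ is odd, so $q^y+1$ can be a power of $2$ only if $q+1=2(p+1)$ is, whence comparing $q^y+1$ with $2^{v_2(q+1)}$ forces $y=1$; the residual case $y=1$ reduces to $(2^r-1)^{m_1}=2^{r+2}-1$ with $p=2^r-1$, which fails by a short size estimate. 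The bookkeeping --- tracking which $2$-adic and $q$-adic parts land on which factor, and respecting $u<v$ --- is the only thing requiring care; all else is routine Legendre-symbol computation and reduction mod $8$.
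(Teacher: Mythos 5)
Your proposal is correct and follows essentially the same route as the paper's proof: reduce mod $q$ to force $x=2m$, factor $(p^m-z)(p^m+z)=2^{ky}q^y$, split according to how the coprime parts distribute over the two factors, and kill the branches with mod~$8$ counts, a refactorization of $p^m-1$, and small direct computations at $ky=3$. The remaining differences are cosmetic --- you derive $p\equiv 3\pmod 8$ and $ky$ odd from the branch equations rather than from the original equation reduced mod $p$, you merge the paper's Cases 2 and 3 via $z=\lvert q^y-2^{ky-2}\rvert$, and you replace two Catalan invocations by the divisibility $q-1\mid q^y-1$ and the odd-cofactor argument for $q^y+1$ (just remember to also dispose of the trivial sub-case $p^{m/2}-1=2$ in the refactorization, which your sketch omits but which is immediately impossible since $p^{m/2}+1=4$ cannot equal $2^{ky-3}q^y$ with $q\geq 7$).
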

\begin{proof}
Let $q=2p+1$. 

If $x=0$, then $(x,y,z)=(0,0,0)$.

For $x\geq 1$, take modulo $p$ in Equation \ref{eq3.3}, we obtain

$$-(2^k)^y\equiv z^2 \pmod p.$$

So, $1=(\frac{-2^{ky}}{p})=-(\frac{2}{p})^{ky}$. We must have $2 \nmid ky$ and $p\equiv 3 \pmod 8$. Since $y$ is odd, then $y\geq1$. Take modulo $q$ in Equation \ref{eq3.3}, we have $p^x\equiv z^2 \pmod q$. Because $(\frac{p}{q})=-1$, we obtain $(-1)^x=1$. Let $x=2m$ with $m\geq 1$. We rewrite Equation \ref{eq3.3} as

$$(p^m-z)(p^m+z)=2^{ky}q^y.$$

Since $\gcd(p^m-z,p^m+z)=2$, we consider three cases.

Case 1: $p^m-z=2$ and $p^m+z=2^{ky-1}q^y$.

In this case, we have $p^m=2^{ky-2}q^y+1$. Then, $p^m\equiv 1 \pmod q$. So, $m=2m_1$ for $m_1\geq 1$. So the equation $p^m=2^{ky-2}q^y+1$ can be written as 

$$(p^{m_1}-1)(p^{m_1}+1)=2^{ky-2}q^y.$$

There are three sub cases.

Case 1.1: $p^{m_1}-1=2$ and $p^{m_1}+1=2^{ky-3}q^y$.

In this sub case, $2=2^{ky-3}q^y-2$ which is impossible since $y\geq 1$.

Case 1.2: $p^{m_1}-1=2q^y$ and $p^{m_1}+1=2^{ky-3}$.

In this sub case, we have $1=2^{ky-4}-q^y$. By Lemma \ref{ThCM}, we obtain $y=1$. Hence $q=2^{k-4}-1$ and $p=2^{k-5}-1$. Since both $p$ and $q$ are primes, so are  $k-4$ and $k-5$. Hence, $k-5=2$, $k=7$. So, $p=3$ and $3^{m_1}=1+2q=15$ which is absurd.

Case 1.3: $p^{m_1}-1=2^{ky-3}$ and $p^{m_1}+1=2q^y$.

In this sub case, we have $1=q^y-2^{ky-4}$. By Lemma \ref{ThCM}, we obtain $y=1$ since $q>3$. Hence $q=2^{k-4}+1$ and $p=2^{k-5}$ which is impossible. 

Case 2: $p^m-z=2q^y$ and $p^m+z=2^{ky-1}$.

We have $p^m=2^{ky-2}+q^y$. Hence, $p^m\equiv 2^{ky-2} \pmod q$. We get $(-1)^m=(-1)^{ky-2}=-1$ since $ky$ is odd. Therefore, $2\nmid m$.

If $ky-2\geq 3$, then we receive $p^m\equiv q^y \pmod 8$. But $p^m\equiv p\equiv 3 \pmod 8$ and $q^y\equiv q\equiv 7 \pmod 8$, we obtain a contradiction. Hence, $ky-2=1$, or $ky=3$. 

Case 2.1: $k=1$ and $y=3$.

We have $p^m=2+q^3$. Hence, $0\equiv 3 \pmod p$. So, $p=3$ and $3^m=2+7^3=3\times 5 \times 23$ which is absurd.

Case 2.2: $k=3$ and $y=1$.

We have $p^m=2+q=2p+3$. So, $p=3$, $3^m=9$ and $m=2$. But $z=p^m-2q^y=3^2-2(7)^1<0$, so, no solutions.

Case 3: $p^m-z=2^{ky-1}$ and $p^m+z=2q^y$.

At this time, we have $p^m=2^{ky-2}+q^y$. Exactly same to the discussion in Case 2, we have $ky-2=1$, or $ky=3$. 

Case 3.1: $k=1$ and $y=3$.

Exactly same to Case 2.1. No solutions

Case 3.2: $k=3$ and $y=1$.

We have $p^m=2+q=2p+3$. So, $p=3$, $3^m=9$ and $m=2$. Hence, we obtain another solution $(p,k,x,y,z)=(3,3,4,1,5)$.

\end{proof}

\begin{theorem}\label{th4}
If $p\equiv 1, 3, 5 \pmod 8$ is a Sophie Germain prime and $k\geq 1$, then equation
\begin{equation}\label{eq3.4}
-p^x+(2^k(2p+1))^y=z^2
\end{equation}
has only non-negative solutions $(x,y,z)=(0,0,0)$, $(p,k,x,y,z)=(3,1,3,2,13),$
$(3,2,3,1,1), (3,2,1,1,5), (11,2,1,1,9)$.
\end{theorem}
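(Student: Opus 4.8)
The plan follows the pattern of Theorems \ref{th1}--\ref{th3}: write $q=2p+1$ (a prime with $q\ge 7$, since $p\ge 3$), clear the degenerate cases, then extract strong congruence conditions by reducing modulo $q$, modulo $p$, and modulo $8$. First, if $y=0$ the equation reads $1-p^x=z^2\le 1$, which forces $x=0$ and yields the solution $(0,0,0)$; so assume $y\ge 1$ from now on. Then $z\ne 0$ (otherwise $p^x=(2^kq)^y$ is both odd and even), and $q\nmid z$, for otherwise $q\mid z^2=(2^kq)^y-p^x$ would give $q\mid p^x$, impossible. Hence $-p^x$ is a nonzero quadratic residue mod $q$, so $\bigl(\tfrac{-p^x}{q}\bigr)=1$.

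Next I compute this Legendre symbol. Since $p$ is odd, $q=2p+1\equiv 3\pmod 4$, so $\bigl(\tfrac{-1}{q}\bigr)=-1$; and since $q\equiv 1\pmod p$ we have $\bigl(\tfrac{q}{p}\bigr)=\bigl(\tfrac{1}{p}\bigr)=1$, so quadratic reciprocity (with $\tfrac{q-1}{2}=p$ odd) gives $\bigl(\tfrac{p}{q}\bigr)=(-1)^{(p-1)/2}$. Therefore $\bigl(\tfrac{-p^x}{q}\bigr)=(-1)^{1+x(p-1)/2}$, and requiring this to equal $1$ forces $p\equiv 3\pmod 4$ and $x$ odd (in particular $x\ge 1$). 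This already disposes of $p\equiv 1,5\pmod 8$, where the only solution is $(0,0,0)$. Henceforth $p\equiv 3\pmod 8$, $x$ is odd, and $x,y\ge 1$.

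Now reduce modulo $p$: using $q\equiv 1\pmod p$ we get $2^{ky}\equiv z^2\pmod p$, so $1=\bigl(\tfrac{2}{p}\bigr)^{ky}=(-1)^{ky}$ because $p\equiv 3\pmod 8$; thus $ky$ is even, so $ky\ge 2$. Then reduce modulo $8$: if $ky\ge 4$ then $8\mid (2^kq)^y$, so $z^2\equiv -p^x\equiv -3\equiv 5\pmod 8$ (using $p\equiv 3\pmod 8$ and $x$ odd), which is impossible. Hence $ky=2$, i.e.\ $(k,y)=(1,2)$ or $(k,y)=(2,1)$.

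It remains to settle these two cases by elementary factoring. For $(k,y)=(1,2)$ the equation becomes $(2q-z)(2q+z)=p^x$; the two factors are positive and coprime (a common divisor is a power of $p$ dividing $4q$, hence $1$, since $p\nmid 4q$), so $2q-z=1$, $2q+z=p^x$, whence $p^x=4q-1=8p+3$, forcing $p\mid 3$, $p=3$, $x=3$, $z=13$. For $(k,y)=(2,1)$ the equation becomes $z^2+p^x=4q=8p+4$; since $z\ge 1$ we get $p^x\le 8p+3$, and because $x$ is odd this forces $x=1$ except when $p=3$, where also $x=3$, $z=1$ is possible. When $x=1$ one has $(z-2)(z+2)=z^2-4=7p$, and matching the divisor pairs $\{1,7p\}$ and $\{7,p\}$ of $7p$ (note $p\ne 7$ since $p\equiv 3\pmod 8$) against the difference $4$ gives $p=3$ ($z=5$) or $p=11$ ($z=9$). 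Collecting $(0,0,0)$, $(3,1,3,2,13)$, $(3,2,3,1,1)$, $(3,2,1,1,5)$, $(11,2,1,1,9)$ completes the proof. The one genuinely delicate step is the reduction modulo $q$: computing $\bigl(\tfrac{-p^x}{q}\bigr)$ correctly is what simultaneously eliminates the classes $1,5\pmod 8$ and pins down the parity of $x$; everything afterward is routine, the only mildly fiddly part being the bound $p^x\le 8p+3$ and the analysis of $z^2=7p+4$ in the case $(k,y)=(2,1)$.
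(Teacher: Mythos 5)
Your proposal is correct and follows essentially the same route as the paper: reduce mod $q$ to force $p\equiv 3\pmod 8$ and $x$ odd, reduce mod $p$ to make $ky$ even, reduce mod $8$ to pin down $ky=2$, and then settle the two cases $(k,y)=(1,2),(2,1)$. The only (minor, and arguably cleaner) deviation is in the case $(k,y)=(1,2)$, where you factor $(2q-z)(2q+z)=p^x$ and use coprimality to get $p^x=4q-1$ directly, whereas the paper instead runs a size/casework argument on $p=3$, $p=11$, $p\ge 27$; both are valid.
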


\begin{proof}
Let $q=2p+1$. 

If $y=0$, then $(x,y,z)=(0,0,0)$.

For $y\geq 1$, take modulo $q$ in Equation \ref{eq3.4}, we obtain

$$-p^x\equiv z^2 \pmod q.$$

If $p\equiv 1,5 \pmod 8$, then $q\equiv 3 \pmod 8$. We get $1=(\frac{-p^x}{q})=-1$, a contradiction.

Hence, from now on, we assume  $p\equiv 3 \pmod 8$ and $q\equiv 7 \pmod 8$. From $-p^x\equiv z^2 \pmod q$, we obtain $(-1)^{x+1}=1$. So, $2\nmid x$ and $x\geq 1$. Take modulo $p$ in Equation \ref{eq3.4}, we get $2^{ky}\equiv z^2 \pmod p$. Since $(\frac{2}{p})=-1$, we have $(-1)^{ky}=1$. So, $2\mid ky$. If $ky\geq 4$, we get $-p^x\equiv z^2 \pmod 8$ by Equation \ref{eq3.4}. But $-p^x\equiv -p\equiv 5 \pmod 8$. We receive a contradiction. So, $ky=2$. 

We consider two possibilities.

Case 1: $k=2$ and $y=1$.

Equation \ref{eq3.4} can be written as $-p^x+8p+4=z^2$. We know $p=3,11,\dots$ since $p\equiv 3 \pmod 8$.

For $p=3$, we have $-3^x+28=z^2$, or $28=3^x+z^2$. Hence, $(x,z)=(3,1), (1,5)$. We obtain two solutions $(p,k,x,y,z)=(3,2,3,1,1), (3,2,1,1,5)$.

For $p\geq 11$, if $x\geq 2$, then $-p^x+8p+4<0$. So, $x=1$ and $7p+4=z^2$. From $(z-2)(z+2)=7p$, we know $z-2=1$ is impossible. So, $z-2=7$ and $z+2=p$. Hence, we get a solution $(p,k,x,y,z)=(11,2,1,1,9)$.

Case 2: $k=1$ and $y=2$.

Equation \ref{eq3.4} can be written as $-p^x+(4p+2)^2=z^2$. We know $p=3,11,\dots$ since $p\equiv 3 \pmod 8$.

For $p=3$, we have  $196=3^x+z^2$. So, $x=3$ and $z=13$. We obtain a solution $(p,k,x,y,z)=(3,1,3,2,13)$.

For $p=11$, it is easy to check $46^2=11^x+z^2$ has no solution.

Since 19 is not a Sophie Germain prime, we can assume $p\geq 27$. 

If $x\geq 3$, then $p^x\geq p^3\geq 27p^2>(4p+2)^2$, so $-p^x+(4p+2)^2=z^2$ has no solutions. Since $x$ is odd, there is only one possibility, $x=1$. So, $-p+(4p+2)^2=z^2$, or $(4p+2-z)(4p+2+z)=p$. Hence, $4p+2+z=p$ which is impossible.
\end{proof}

\begin{remark}
From the proof of  Theorem \ref{th4}, It is clear that when $p\equiv 1, 5 \pmod 8$, for $k=0$, then equation in Theorem \ref{th4} has only solution $(x,y,z)=(0,0,0)$.
\end{remark}

\end{document}